\numberwithin{equation}{section}
\newtheorem{lemma}{Lemma}[section]
\newtheorem{theorem}[lemma]{Theorem}
\newtheorem{proposition}[lemma]{Proposition}
\newtheorem{definition}[lemma]{Definition}
\newtheorem{corollary}[lemma]{Corollary}
\newtheorem{example}[lemma]{Example}
\newtheorem{exercise}[lemma]{Exercise}
\newtheorem{remark}[lemma]{Remark}
\newtheorem{fig}[lemma]{Figure}
\newtheorem{tab}[lemma]{Table}
\newcommand{\bth}{\begin{theorem}}
\newcommand{\ethe}{\end{theorem}}
\newcommand{\bre}{\begin{remark}\em }
\newcommand{\ere}{\end{remark}}
\newcommand{\ble}{\begin{lemma}}
\newcommand{\ele}{\end{lemma}}
\newcommand{\bde}{\begin{definition}}
\newcommand{\ede}{\end{definition}}
\newcommand{\bco}{\begin{corollary}}
\newcommand{\eco}{\end{corollary}}
\newcommand{\bpr}{\begin{proposition}}
\newcommand{\epr}{\end{proposition}}
\newcommand{\bexer}{\begin{exercise}}
\newcommand{\eexer}{\end{exercise}}
\newcommand{\bexam}{\begin{example}\rm  }
\newcommand{\eexam}{ \end{example}}
\newcommand{\bfi}{\begin{fig}}
\newcommand{\efi}{\end{fig}}
\newcommand{\btab}{\begin{tab}}
\newcommand{\etab}{\end{tab}}
\def\E{{\mathbb{E}}}
\def\P{{\mathbb{P}}}
\def\R{{\mathbb{R}}}
\def\N{{\mathbb{N}}}
\def\B_e{B_{\eta}(e)}
\newcommand{\ov}{\overline}
\newcommand{\wt}{\widetilde}
\definecolor{darkblue}{rgb}{0,0,1}
\definecolor{darkgreen}{rgb}{0,1,0}
\definecolor{darkred}{rgb}{1, 0,0}
\newcommand{\sign}{{\rm sign}}
\newcommand{\var}{{\rm var}}
\newcommand{\beao}{\begin{eqnarray*}}
\newcommand{\eeao}{\end{eqnarray*}\noindent}
\newcommand{\beam}{\begin{eqnarray}}
\newcommand{\eeam}{\end{eqnarray}\noindent}
\newcommand{\beqq}{\begin{equation}}
\newcommand{\eeqq}{\end{equation}\noindent}
\newcommand{\bce}{\begin{center}}
\newcommand{\ece}{\end{center}}
\newcommand{\barr}{\begin{array}}
\newcommand{\earr}{\end{array}}
\newcommand{\vague}{\stackrel{\lower0.2ex\hbox{$\scriptscriptstyle
                    \it{v} $}}{\rightarrow}}
\newcommand{\weak}{\stackrel{\lower0.2ex\hbox{$\scriptscriptstyle
                    \it{w} $}}{\rightarrow}}
\newcommand{\what}{\stackrel{\lower0.2ex\hbox{$\scriptscriptstyle
                    \it{\hat{w}} $}}{\rightarrow}}
\newcommand{\bdis}{\begin{displaymath}}
\newcommand{\edis}{\end{displaymath}\noindent}
\newcommand{\vep}{\varepsilon}
\newcommand{\bfa}{{\bf a}}
\newcommand{\bali}{\begin{align}}
\newcommand{\eali}{\end{align}}
\begin{document}

\bibliographystyle{alpha}
\title[Trigonometrically approximated maximum likelihood estimation for stable law]
      {Trigonometrically approximated maximum likelihood estimation for stable law}
\today
\author[M. Matsui]{Muneya Matsui}
\address{Department of Business Administration, Nanzan University,
	18 Yamazato-cho Showa-ku Nagoya, 466-8673, Japan}
\email{mmuneya@nanzan-u.ac.jp}

\author[N. Sueishi]{Naoya Sueishi}
\address{Graduate School of Economics, Kobe University, 2-1 Rokkodai-cho, Nada-ku, Kobe, Hyogo, Japan}
\email{sueishi@econ.kobe-u.ac.jp}

\begin{abstract}
A trigonometrically approximated maximum likelihood estimation for $\alpha$-stable laws is proposed.
 The estimator solves 
 the approximated likelihood equation,
 which is obtained by projecting a true score function on the space spanned by trigonometric functions. 
 The projected score is expressed only by real and imaginary parts of the characteristic function and their derivatives, so that 
 we can explicitly construct the targeting estimating equation.   
 We study the asymptotic properties of the proposed estimator and show consistency and asymptotic normality. 
 Furthermore, as the number of trigonometric functions increases, the estimator converges to the exact maximum likelihood estimator, 
 in the sense that they have the same asymptotic law. 
 Simulation studies show that our estimator outperforms other moment-type estimators, and  
 its standard deviation almost achieves the Cram\'er--Rao lower bound. 
 We apply our method to the estimation problem for $\alpha$-stable Ornstein--Uhlenbeck processes 
 in a high-frequency setting. The obtained result demonstrates the theory of 
 asymptotic mixed normality.
\vspace{2mm} \\
{\it Key words. }\ characteristic function, stable distributions, Fisher
 information matrix, maximum likelihood estimator, $\alpha$-stable Ornstein--Uhlenbeck processes.  
\end{abstract}
\subjclass[2010]{Primary 60E07,\,62E17; Secondary 62F12,\,62E20}
\thanks{Matsui's research is partly supported by the JSPS Grant-in-Aid
for Scientific Research C (19K11868).
Sueishi thanks the support by the JSPS Grant-in-Aid for Scientific Research B (19H01473)}
\maketitle

\section{Introduction}
\label{sec:introduction}
This paper proposes a novel characteristic function (ch.f.)-based estimation method called the trigonometrically approximated maximum likelihood estimation (TMLE). 
Our approach relies on the approximation method of a score function in Brant \cite{brant:1984}, 
where the approximation is done by projecting 
a true score on the functional space spanned by trigonometric functions.  
By solving the approximated score equation, we conduct TMLE.
Because the projected score is expressed by combinations of real and imaginary parts of the ch.f. 
and its derivatives, we can easily calculate the approximated score solely from the original ch.f.
This method is effective for the class of distributions whose ch.f.'s are available in closed form but whose densities are not. 
An example is the class of infinitely divisible (ID) distributions, which provides
finite dimensional distributions for fundamental stochastic processes such as L\'evy processes and additive processes 
(see, e.g. Sato \cite{sato:1999}).

Specifically we apply the proposed method to the estimation of stable laws, 
which hold a prominent position among ID distributions.  
For details and notable properties of stable laws, see Samorodnitsky and Taqqu \cite{samorodnitsky:taqqu:1994}, Nolan \cite{nolan:2020} and references therein.
We show that the trigonometrically approximated maximum likelihood (TML) estimator converges to the true maximal likelihood (ML) estimator as 
the number of trigonometric functions 
increases.
Therefore, TMLE takes over nice properties of the maximum likelihood estimation (MLE), such as asymptotic normality and efficiency. 
Furthermore, we extend TMLE to an estimation method 
suitable for the parameter estimation of $\alpha$-stable Ornstein--Uhlenbeck processes.
In what follows, we briefly 
introduce the literature and explain 
features of our estimator by comparing it with previous estimation methods.

The parameter estimation based on the ch.f. was originally proposed by Press \cite{press:1972} and has been 
developed into various directions. A regression-type estimator was proposed by 
Koutrouvelis \cite{koutrouvelis:1980,koutrouvelis:1981} and further investigated by Kogan and Williams \cite{kogon:williams:1998}. 
Paulson et al.\,\cite{paulsoetal:1975} considered an estimator that minimizes the squared integrated distance between the true ch.f. and the empirical one. 
Heathcote \cite{heathcote:1977} implemented the method and discussed its efficiency and robustness.  
Feuerverger and McDunnough \cite{feuerverger:mcDunnough:1981a, feuerverger:mcDunnough:1981b} 
showed that their moment-type estimator can attain arbitrary high efficiency. 
Notice that some of the estimation methods are applicable not only for stable 
laws but also for general distributions with explicit ch.f's. 

The estimator by \cite{feuerverger:mcDunnough:1981a, feuerverger:mcDunnough:1981b} can be viewed as
 the generalized method of moments (GMM) estimator \cite{hansen:1982}.  
In the econometric literature, the GMM estimator and its variants 
\cite{hansen:heaton:yaron:1996, imbens:spady:johnson:1998, kitamura:stuzer:1997, newey:smith:2004, qin:lawless:1994} are 
commonly used to achieve semiparametrically efficient estimation.
However, they are suboptimal for the estimation of the stable law if they use only a fixed number of moment conditions, constructed from the ch.f.
Indeed, to attain high efficiency in the GMM estimator of \cite{feuerverger:mcDunnough:1981a} 
the number of moment conditions should be increased. 
Kunitomo and Owada \cite{kunitomo:owada:2006} showed that the empirical likelihood (EL) estimator 
is asymptotically normal and efficient if the number of moment conditions increases at a certain rate with the sample size.

Although the GMM-type estimators satisfy desirable properties, they have two deficiencies.
First, the number of moment conditions can increase only at a slow rate.
For instance, in the EL estimator it must increase at the rate $n^{1/3 -\eta}$ for $\eta>0$, where $n$ is the sample size.
Thus, the EL estimator cannot achieve high efficiency in a finite sample.
Second, previous studies \cite{carrasco:florens:2002, singleton:2001, yu:2004} 
point out that the weight matrix of the optimal GMM estimator tends to be singular as the number of moment conditions increases.
Therefore, the GMM-type estimators are infeasible when the number of moment conditions is relatively large compared to the sample size.

To overcome these problems, Carrasco and Florens \cite{carrasco:florens:2000} proposed a version of the GMM estimator 
that utilizes a continuum of moment conditions (hereafter, the CGMM estimator).
See also \cite{carrasco:chrnov:florens:ghysels:2007, carrasco:florens:2002, carrasco:kotchoni:2017}. 
The CGMM uses an uncountable number of moment conditions from the beginning, 
which may cause the degeneracy of a certain  covariance operator.
However, the degeneracy problem is avoided by introducing regularization. 
Thus, the number of moment conditions is not restricted by the sample size.

TMLE addresses the same issues with CGMM.
As we will see in the next section, the TML estimator has a close relation with the optimally weighted GMM estimator.
The number of trigonometric functions in TMLE plays the same role as the number of moment conditions in GMM estimation.
Moreover, to obtain the approximated score function, we calculate a covariance matrix that is similar to the weight matrix of the optimal GMM.
The novelty of TMLE is that the number of trigonometric functions can be increased regardless of the sample size to satisfy asymptotic normality and efficiency.
Moreover, the covariance matrix of TMLE is non-singular even if its dimension is large.
Furthermore, our simulation shows that the TML estimator outperforms the CGMM estimator in terms of finite sample efficiency.

As for the exact MLE, the asymptotic theory has been established earlier by DuMouchel \cite{dumouchel:1973} 
and recently elaborated by Matsui \cite{matsui:2019} under continuous parametrizations.
Since the ch.f. is the Fourier transform of the density, the likelihood function can be obtained by numerically 
evaluating the inversion formula. However, one encounters several difficulties in practical implementation. 
So far the methods of implementation have been studied only intermittently. 
Started from an early work by DuMouchel \cite{dumouchel:1975}, several attempts have been made,  
especially in the symmetric case (see references in Nolan \cite[Section 4]{nolan:2020}). 
However, in the general non-symmetric case, there are only a few studies \cite{mittniketal:1999,nolan:2001}. 

There are two issues in implementing MLE: computational complexity and  
accuracy. 
There exists a clear trade-off relation between these two aspects. 
For example, to ease the computational burden of the inversion formula for the densities, 
Mittnik et al. \cite{mittniketal:1999} have examined MLE solely by the FFT-based approximation of density functions. 
However, that limits the accuracy, especially in the tail. 
Nolan \cite{nolan:2001} uses a precomputed spline approximation of stable densities in doing MLE, which is equipped as 
the program ``STABLE''. The program is fast, but the values reported by it have limited accuracy in several regions due 
to technical difficulties in approximation (see also \cite{royuela:at:al:2017}).
On the other hand, 
regarding the pursuit of accuracy, 
few papers have rigorously validated the accuracy of their implemented ML estimators. Assuming symmetry, Matsui \cite{matsui:takemura:2008} improved 
the accuracy of ML estimators and certified the accuracy by 
comparing the true Fisher information matrix with the empirical one calculated 
from simulated ML estimators, and have observed that the two are almost identical.
To obtain high accuracy, a finite integral representation of Zolotarev \cite{zolotarev} is used for the central part of the density and an infinite series expansion of the density is applied for the tail part. 
Although the procedure is highly accurate, its computational burden is quite large. Besides one has to 
determine the regions where the finite integral representation or the series expansion is appropriate.  

Turning now to TMLE, since it uses the ch.f. and its derivatives only, we do not need numerical integrations nor 
series expansions of densities, so that it is computationally simple and light. 
Furthermore, it gives high accuracy.  
Our simulation result shows that the variance of the TML estimator almost achieves the Cram\'er--Rao lower bound.
Thus, TMLE shares nice properties with MLE such as asymptotic efficiency, while it is computationally reasonably tractable.  
Finally and most importantly, the procedure given in TMLE is easy to extend to other contexts. 
Here we apply it to approximate the conditional ML estimator of $\alpha$-stable Ornstein--Uhlenbeck (OU) processes. 
Applications to ID distributions other than stable laws are also possible (cf. \cite{sueishi:nishiyama:2005}).  
Since the class of ID distributions constitute marginal laws of L\'evy processes, we may consider, for instance
OU processes driven by L\'evy processes, which have been studied intensively in recent years. 
In summary, the method of TMLE has potential application in various active areas. 

The paper is organized as follows.
After introducing several preliminaries in Section \ref{sec:preliminary},
 we rigorously define TMLE in Section \ref{subsec:qscore}. 
The non-degeneracy of the covariance matrix for TMLE is also proved in this section.
TMLE is compared with other related estimation methods in Section \ref{subsec:comparison}. 
Asymptotics such as consistency and asymptotic normality are obtained in Section \ref{sec:asymptotics}. 
Subsection \ref{subsec:asymptotic} treats the case where the number of trigonometric functions is fixed. 
As the number of trigonometric functions increases, the TML estimator is shown to converge to the ML estimator regardless of sample size (see 
Subjection \ref{subsec:asymptQMLinfty}), so that TMLE is proved to have asymptotic efficiency. 
Most proofs and auxiliary results for Section \ref{sec:asymptotics} are given in the Appendix.
We briefly discuss efficiency in finite samples at the end of Section \ref{sec:asymptotics}. 
In Section \ref{sec:numerical} we examine finite sample performance of TMLE by Monte Carlo simulation, which shows that the TML estimator outperforms other asymptotically efficient estimators. 
Indeed, we see that the standard deviation of TMLE almost achieves the Cram\'er--Rao bound.
Extending the TMLE method, we consider the parameter estimation for $\alpha$-stable Ornstein--Uhlenbeck processes in 
Section \ref{application:ou}, where we numerically observe that 
the asymptotic mixed normality holds for the mean reversion parameter 
and asymptotic normality holds for other parameters in a high-frequency setting.

\section{Trigonometrically approximated maximum likelihood estimation}
\label{sec:qscore}
\subsection{Preliminary}
\label{sec:preliminary} 
We work on a continuous parametrization of the stable law (cf. \cite{zolotarev,nolan:1998}) whose ch.f. is 
given by 
\begin{align}
\label{chfM}
\begin{split}
 \varphi(u;\theta) 
= \left \{
\begin{array}{ll}
\exp\Big(
 -|\sigma u|^\alpha \big\{
 1+i\beta\, \sign u \tan \frac{\pi\alpha}{2}(|\sigma u|^{1-\alpha}-1)
\big\} +i\mu u 
\Big)  & \text{if}\ \alpha\neq 1\\
 \exp\Big(
-|\sigma u| -i  \sigma u \, (2\beta /\pi) \log |\sigma u| +i\mu u
\Big)  & \text{if}\ \alpha= 1,
\end{array}
\right.
\end{split}
\end{align}
where $\mu\in \R,\,\sigma \in (0,\infty),\ \alpha\in(0,2]$, and $\beta\in[-1,1]$ are parameters.
The parameter vector is denoted by $\theta=(\theta_1,\theta_2,\theta_3,\theta_4)^\top=(\mu,\sigma,\alpha,\beta)^\top$.
Moreover, we denote the parameter space 
and its interior by $\Theta$ and $\mathring{\Theta}$, respectively. 
Because the expression \eqref{chfM} 
is continuous in $\alpha$, the case
$\alpha=1$ may not be necessary.  
By the inversion formula, we see that the density $f(x;\theta)$ satisfies the condition for a location-scale family. 

We use the following notations throughout. Denote real and imaginary parts of 
the ch.f. $\varphi(u;\theta)$ respectively by $\varphi^R(u;\theta)$ 
and $\varphi^I(u;\theta)$, that is,
$\E [e^{iuX}]=\E  [\cos (uX)]+i \E [\sin (uX)]=\varphi^R(u;\theta)+i\varphi^I(u;\theta)$. 
As usual $f'$ and $f''$ 
denote the first and the second derivatives of $f$ with respect
to (w.r.t.) $x$, respectively. 
Moreover, $f_\theta=(f_{\theta_1},f_{\theta_2},f_{\theta_3},f_{\theta_4})^\top$
denotes 
the vector of partial derivatives of $f$ w.r.t. 
$\theta$. 
The second-order partial derivatives w.r.t. $x$ and $\theta$
are denoted by  
\[
 f_{\theta_i}'=\frac{\partial^2 f}{\partial x \partial \theta_i
      }=\frac{\partial^2 f}{\partial \theta_i \partial x },\quad
 f_{\theta_i\theta_j}= \frac{\partial^2 f}{\partial \theta_i \partial
      \theta_j} = \frac{\partial^2 f}{\partial \theta_j \partial
      \theta_i},\quad 
      i,j=1,\ldots,4,  
\]
that is, all derivatives treated are interchangeable in our
case (cf. \cite[Section 2]{matsui:2019}). 

\subsection{Trigonometrically approximated likelihood estimation}
\label{subsec:qscore}
As described in the introduction, we utilize the approximated score function proposed by Brant \cite{brant:1984},
which we call the trigonometrically approximated score function (TSF for short). 
By solving the approximated score equation by TSF, we define TMLE.
The key idea is that TSF is the projection of the true score function onto a
subspace spanned by trigonometric functions. 
The projected score has a nice expression given solely by the ch.f. and its derivatives. 

In what follows, we precisely explain the procedure. 
Let $L^2(f)$ be a Hilbert space with inner product
\begin{align}
\label{eq:innerprod}
 \langle h_1,h_2 \rangle = \int_{-\infty}^{\infty} h_1(x) \bar h_2(x)
 f(x;\theta) dx = \E_\theta [h_1(X)\bar h_2(X)],\quad h_1,h_2 \in L^2(f).
\end{align}
Notice that the expectation is taken w.r.t. $f(x;\theta)$, the density of $X$,
and both $\langle \cdot,\cdot \rangle$ and $L^2(f)$ depend on $\theta$; for convenience we abbreviate $\theta$ in these notations. 
For a given set of points $(u_1, \dots, u_k)$ such that $|u_i| \neq |u_j|$ for $i \neq j$, we prepare a vector of trigonometric functions    
\[
 g (x)=(\cos (u_1x),\cdots,\cos(u_k x),\sin (u_1 x),\cdots,\sin(u_k x))^\top
\]
and its expectation 
\begin{align}
\label{def:gamma}
 \gamma(\theta):= \E_\theta  [g(X)]= 
(\varphi^R(u_1;\theta),\cdots,\varphi^R(u_k;\theta), \varphi^I(u_1;\theta),\cdots,\varphi^I(u_k;\theta))^\top. 
\end{align}
The elements of vector $g$ are linearly
independent 
and span a subspace of $L^2(f)$. Generally, $g$ constitutes a non-orthogonal basis. 

Next we project the score
$S(x;\theta)=f_\theta(x;\theta)/f(x;\theta)$ onto the subspace 
spanned by centered trigonometric functions $g-\gamma(\theta)$.  
The projected score $\wt S(x;\theta)$ 
satisfies 
\begin{align*}
 S(x;\theta) &= \underbrace{A\, (g(x)-\gamma(\theta))}_{:=\wt S(x;\theta)}+( g(x)-\gamma(\theta) )^\perp   
\end{align*}
with some coefficient $A\,(4\times 2k\ \mathrm{matrix})$,
where $( g(x)-\gamma(\theta) )^\perp$ denotes the orthogonal complement of the space spanned by $\{1,g\}$. 
By taking the inner product of both sides with $g(x)-\gamma(\theta)$, we obtain 
\begin{align*}
A &= \boldsymbol{\big\langle} S, g-\gamma(\theta) \boldsymbol{\big\rangle}\, \boldsymbol{\big\langle} g-\gamma(\theta), g-\gamma(\theta)  \boldsymbol{\big\rangle}^{-1}
\end{align*}
with
\begin{align*}
& \boldsymbol{\big\langle} g-\gamma(\theta), g-\gamma(\theta)  \boldsymbol{\big\rangle}
 =\E_\theta[(g(X)-\gamma(\theta))\, (g(X)-\gamma(\theta))^\top ]=\Sigma(\theta) \\ 
&\text{and}\quad \boldsymbol{\big\langle} S, g-\gamma(\theta) \boldsymbol{\big\rangle}
  = \E_\theta [S(X;\theta)\,(g(X)-\gamma(\theta))^\top ] = \gamma_\theta(\theta), 
\end{align*}
where the inner product 
of vectors is taken 
elementwise. 
Here under exchangability of expectation and derivative, 
$\gamma_\theta (\theta)$ is expressed by a
 $4 \times 2k$ derivative matrix
\footnote{This is denominator-layout notation (the Hessian formulation, the gradient) where a vector-by-vector derivative is given by 
the transpose of Jacobian.}:
\begin{align}
\label{def:deriv:gamma}
 \gamma_\theta (\theta):=
 \frac{\partial \gamma(\theta)}{\partial \theta} = 
(\varphi^R_\theta (u_1;\theta),\ldots,\varphi^R_\theta (u_k;\theta),\varphi^I_\theta(u_1;\theta),\dots,\varphi^I_\theta (u_k;\theta)).
\end{align}
The elements of $\Sigma(\theta)$ are given by 
\begin{align}
\label{def:cov:chf}
\text{Cov}_{\theta}(\cos(u_i X),\cos (u_j X)) &= \frac{1}{2} \big( \varphi_\theta^R(u_i+u_j;\theta)+ \varphi^R_\theta(u_i-u_j;\theta)\big) -\varphi_\theta^R(u_i;\theta)\varphi^R_\theta(u_j;\theta), \nonumber\\
\text{Cov}_{\theta}(\cos(u_i X),\sin (u_j X)) &= \frac{1}{2} \big( \varphi_\theta^I(u_i+u_j;\theta)- \varphi^I_\theta(u_i-u_j;\theta) \big) -\varphi_\theta^R(u_i;\theta)\varphi^I_\theta(u_j;\theta),\\
\text{Cov}_{\theta}(\sin (u_i X),\sin (u_j X)) &= \frac{1}{2} \big( - \varphi_\theta^R(u_i+u_j;\theta)
+ \varphi^R_\theta(u_i-u_j;\theta) \big) -\varphi_\theta^I(u_i;\theta)\varphi^I_\theta(u_j;\theta). \nonumber
\end{align}
Thus TSF is constructed by 
\begin{align}
\label{def:TMLE}
 \wt S (x;\theta) = \gamma_\theta(\theta)\, 
 \Sigma(\theta)^{-1} (g(x)-\gamma(\theta)). 
\end{align}
It is easy to see that
\begin{align}
\label{hessian:tmle}
\wt I (\theta) = \var_\theta (\wt S(X;\theta)) = 
\gamma_\theta(\theta) \Sigma(\theta)^{-1}
\gamma_\theta(\theta)^\top, 
\end{align}
which will be shown to converge to the true Fisher information matrix as $k \to \infty$ in 
the proof of Theorem \ref{thm:consistency}.

Now we write the empirical TSF by 
\begin{align*}
\wt S_n(\theta) = \frac{1}{n}\sum_{j=1}^n \wt S(X_j;\theta)   
\end{align*}
and define the TML estimator $\hat \theta_n$ as an element of the set
\begin{align}
\label{def:tmle}
\{\theta\in C\, \mid\, \wt S_n(\theta)=0\}
\end{align}
with some compact set $C \subset  \mathring{\Theta}$.  
Since the existence of exact zeros of the empirical TSF $\wt S_n(\theta)$  
is not always assured in a small sample, 
if there are no roots, we choose one of local minimum values of $|\wt S_n(\theta)|$ as the estimator.
Although there might exist multiple local minima, we can appropriately choose a consistent sequence, as we will discuss shortly.

\begin{remark}
	\label{rem:sec:triscore}
	$(\mathrm{i})$ Interestingly the expectation of the derivative of $\wt S(X;\theta)$ w.r.t. $\theta$ 
	satisfies
	\[
	\E_\theta \left[-\frac{\partial \wt S(X;\theta)}{\partial \theta} \right] = \gamma_\theta(\theta) \Sigma(\theta)^{-1}
	\gamma_\theta(\theta)^\top =\wt I (\theta)
	\]
	under certain regularity conditions. The relation corresponds to the second definition of the Fisher information matrix. 
	\\
	$(\mathrm{ii})$ As one can see, the procedure of TMLE is applicable to any laws with explicit ch.f.'s. 
	Thus TMLE could be a powerful statistical tool for the class of ID distributions, 
whose ch.f is given by the L\'evy--Khintchine representation $($e.g. \cite[Section 8]{sato:1999}$)$. 
\end{remark}
Finally, we discuss the 
non-degeneracy of $\wt I (\theta)$. 
One may wonder if an arbitrary choice of evaluation points of $g$ 
results in the singularity of $\wt I$. 
In fact, we avoid this by the condition $|u_i|\neq |u_j|$ for $i\neq j$. Recall that  
$\wt S(x;\theta)$ 
is constructed by the projection on a space spanned by trigonometric functions. 
The idea behind this is to choose $(u_i)_{i\le k}$ such that trigonometric functions keep linear independence. 
In the following lemma we show that the rows of $\gamma_\theta(\theta)$ are linearly independent for $k\ge 2$ 
and the inverse of the Grammian matrix $\Sigma(\theta)^{-1}$ is positive definite. 
\begin{lemma}
\label{lem:auxiliary1}
Suppose that $u_i \neq 0$ for all $i = 1, \dots, k$.
For any $\theta \in \mathring{\Theta}$
and for any choice of  $(u_1,\ldots,u_k)\in \R^k$ such that $|u_i|\neq |u_j|$ for $i\neq j$, \\ 
$(\mathrm{i})$ the rows of $\gamma_\theta(\theta)$ are linearly independent for
$k\ge 2$, namely $\rm{rank}(\gamma_\theta(\theta))=4$. \\
$(\mathrm{ii})$ $\Sigma(\theta)$ is positive definite. 
\end{lemma}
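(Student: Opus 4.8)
The plan is to prove the two parts by separate, essentially self-contained arguments, each of which I reduce to the linear independence of a trigonometric-type system. Starting with (ii), I would use that $\Sigma(\theta)$ is the covariance matrix of $g(X)$, so that for $c=(a_1,\dots,a_k,b_1,\dots,b_k)^\top\in\R^{2k}$ one has $c^\top\Sigma(\theta)c=\var_\theta(c^\top g(X))\ge 0$, with equality if and only if $h(X):=\sum_{i}a_i\cos(u_iX)+\sum_i b_i\sin(u_iX)$ is $\P_\theta$-a.s.\ constant. Since for $\theta\in\mathring\Theta$ the stable density $f(\cdot;\theta)$ is strictly positive on all of $\R$, such an a.s.\ relation is in fact an identity: $h(x)$ equals a constant for every $x\in\R$. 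Expanding $h$ and the constant in the exponential basis $\{e^{i\omega x}\}$, the frequencies that occur are $0,\pm u_1,\dots,\pm u_k$, which are pairwise distinct precisely because $u_i\neq 0$ and $|u_i|\neq|u_j|$ for $i\neq j$. Linear independence of exponentials with distinct frequencies then forces every coefficient to vanish, i.e.\ $c=0$, so $\Sigma(\theta)$ is positive definite.

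For (i) I would first reduce ``rows of $\gamma_\theta(\theta)$ linearly independent'' to the implication $\lambda^\top\gamma_\theta(\theta)=0\Rightarrow\lambda=0$ for $\lambda=(\lambda_1,\dots,\lambda_4)^\top\in\R^4$. As the columns of $\gamma_\theta(\theta)$ are $\varphi^R_\theta(u_i;\theta)$ and $\varphi^I_\theta(u_i;\theta)$, the equation $\lambda^\top\gamma_\theta(\theta)=0$ says $\lambda^\top\varphi^R_\theta(u_i;\theta)=\lambda^\top\varphi^I_\theta(u_i;\theta)=0$ for every $i$. Writing $\varphi=e^{\psi}$ with $\psi$ the characteristic exponent and using $\varphi_\theta=\varphi\,\psi_\theta$ together with $\varphi\neq0$, this is equivalent to the vanishing of the directional derivative $D_\lambda\psi:=\sum_m\lambda_m\psi_{\theta_m}$ at each $u_i$, that is $D_\lambda\psi^R(u_i;\theta)=D_\lambda\psi^I(u_i;\theta)=0$ for all $i$. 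This is the key conceptual step: it converts the matrix condition into a statement about the exponent, which is far easier to differentiate than $\varphi$ itself.

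It then remains to solve this system. For $\alpha\neq1$ a direct simplification gives $\psi^R(u;\theta)=-|\sigma u|^\alpha$ and $\psi^I(u;\theta)=(\mu-\beta\sigma\tan\tfrac{\pi\alpha}{2})u+\beta\tan\tfrac{\pi\alpha}{2}\,\sign(u)|\sigma u|^\alpha$. Differentiating the real part and factoring yields $D_\lambda\psi^R(u;\theta)=-|\sigma u|^\alpha\big(\tfrac{\alpha}{\sigma}\lambda_2+\lambda_3\log|\sigma u|\big)$; evaluating at two indices with $|u_1|\neq|u_2|$ (available since $k\ge2$) gives two equations in $(\lambda_2,\lambda_3)$ whose coefficient determinant is nonzero because $\log|\sigma u_1|\neq\log|\sigma u_2|$, so $\lambda_2=\lambda_3=0$. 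Substituting into the imaginary part leaves $D_\lambda\psi^I(u;\theta)=\sign(u)\,|u|\big(a+b|u|^{\alpha-1}\big)$ with $a=\lambda_1-\lambda_4\sigma\tan\tfrac{\pi\alpha}{2}$ and $b=\lambda_4\tan\tfrac{\pi\alpha}{2}\,\sigma^\alpha$; since $|u_i|>0$ and $|u_1|^{\alpha-1}\neq|u_2|^{\alpha-1}$ (here $\alpha\neq1$ is essential), the two evaluations force $a=b=0$, and because $\tan\tfrac{\pi\alpha}{2}\neq0$ for $\alpha\in(0,2)\setminus\{1\}$ this gives $\lambda_4=0$ and then $\lambda_1=0$. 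Hence $\lambda=0$ and $\mathrm{rank}(\gamma_\theta(\theta))=4$.

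I expect the main obstacle to be bookkeeping rather than conceptual: correctly carrying the factor $\sign(u)$ through the imaginary part and isolating the scalar combinations $a$ and $b$ so that distinctness of $|u_i|^{\alpha-1}$ can be invoked. A secondary point needing care is the value $\alpha=1$, which lies in $\mathring\Theta$ but is excluded from the first formula; there I would repeat the computation using the $\alpha=1$ expression in \eqref{chfM} (taking $\psi_{\theta_3}$ from the continuous extension in $\alpha$), where the roles of $\tan\tfrac{\pi\alpha}{2}$ and $|u|^{\alpha-1}$ are played by the logarithmic terms $\log|\sigma u|$, and the same two-point, distinct-frequency mechanism applies.
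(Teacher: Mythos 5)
Your proposal is correct and follows essentially the same route as the paper's own proof: part (i) passes to the exponent $\psi=\log\varphi$ via $\varphi_\theta=\psi_\theta\varphi$ and $\varphi\neq 0$, kills the $(\sigma,\alpha)$-coefficients from the real part using distinctness of $\log|\sigma u_i|$ and then the $(\mu,\beta)$-coefficients from the imaginary part using distinctness of $|u_i|^{\alpha-1}$ (with the logarithmic variant at $\alpha=1$), while part (ii) combines the variance representation of $c^\top\Sigma(\theta)c$, full support of the stable density, and linear independence of the trigonometric system (your distinct-frequency exponential argument is just a self-contained substitute for the paper's citation of Christensen). The only cosmetic difference is bookkeeping: the paper fixes $k=2$ and lists $\psi_{\theta_i}$ componentwise, whereas you work with the simplified closed forms of $\psi^R$ and $\psi^I$ directly.
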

The proof is give in Appendix \ref{sebsec:a1}. 
The result is crucial for the asymptotic theory (see the next section) and for the optimization procedure to obtain the TML estimator 
(Section \ref{sec:numerical}).

\subsection{Comparison with GMM and other related estimators}
\label{subsec:comparison}
The TML estimator has a close similarity to the GMM estimator.
Based on moment conditions $\E_{\theta_0}[g(X) -\gamma(\theta_0)]=0$, the optimally weighted two-step GMM estimator is defined by
\begin{align}
\label{def:gmm}
\hat{\theta}_{gmm} = \arg \min_{\theta \in C} \left(\frac{1}{n} \sum_{i=1}^n (g(X_i)-\gamma(\theta)) \right)^\top
\hat{\Sigma}^{-1} \left( \frac{1}{n} \sum_{i=1}^n (g(X_i)-\gamma(\theta)) \right), 
\end{align}
where $\hat{\Sigma}$ is a consistent estimator for $\Sigma(\theta_0)$.
Typically, $\hat{\Sigma}$ is obtained by
\begin{align}
\label{cov}
\hat{\Sigma}(\tilde{\theta}) = \frac{1}{n} \sum_{i=1}^n ( g(X_i) -\gamma(\tilde{\theta}) )( g(X_i) - \gamma(\tilde{\theta}))^\top,
\end{align}
where $\tilde{\theta}$ is a preliminary consistent estimator of $\theta_0$. 
The first-order condition of the minimization problem yields 
\begin{align}
\label{def:GMM:firstcondi}
\frac{1}{n} \sum_{j=1}^n \gamma_\theta(\hat{\theta}_{gmm}) \hat{\Sigma}^{-1}( g(X_i) -  \gamma(\hat{\theta}_{gmm}) ) =0.
\end{align}
Comparing with \eqref{def:TMLE}, we see that the two estimators are closely related, although their derivations are quite different: the TML estimator is obtained by the projection of the true score on trigonometric functions, 
while the GMM estimator is constructed based on the moment conditions.
One clear difference is that we treat $\Sigma(\theta)$ as a function of $\theta$ in the estimator of TMLE \eqref{def:TMLE}, and do not 
use a predetermined matrix.

Some studies pointed out that the GMM estimator is infeasible when the grid of $(u_1, \dots, u_k)$ is too fine, 
because the weight matrix becomes singular \cite{carrasco:chrnov:florens:ghysels:2007, carrasco:florens:2002, singleton:2001, yu:2004}.
There might be some confusion regarding this argument.
Even though the GMM estimator is infeasible if (\ref{cov}) is used, $\Sigma(\theta_0)$ can be estimated by $\Sigma(\tilde{\theta})$ because 
the explicit form of 
$\Sigma(\theta)$ is available 
for the stable law, cf. \eqref{def:cov:chf}.
We do not need to estimate the variance by its sample analog. 
We call the GMM estimator using $\Sigma(\tilde{\theta})$ as the weight matrix  the explicit GMM estimator. 
Lemma \ref{lem:auxiliary1} shows that $\Sigma(\tilde{\theta})$ is non-singular as long as evaluation points are properly chosen.
Therefore, the explicit GMM estimator is indeed feasible no matter how fine the grid is.
Notice, however, that the non-degeneracy of the weight matrix does not imply that explicit GMM can utilize an arbitrary large number of moment conditions to satisfy desirable asymptotic properties.
We will discuss this issue in the next section (Remark \ref{rem:asympt:expgmm}).

We explain other related estimators, which are compared with TML estimator in the simulation. 
The asymptotics of these estimators are discussed in the next section. \\
\noindent
{\bf Other related estimators} \\
$(\mathrm{i})$ 
The CGMM estimator is based on a continuum (continuous function) of moment conditions
$ \E_{\theta_{0}} [h(u,X;\theta_0)]=0$, where $h(u,X;\theta)=e^{iu X}-\varphi(u;\theta)$ 
(see \cite{carrasco:florens:2000, carrasco:florens:2002} for a detailed definition). 
More precisely, for a suitably chosen covariance operator $K$, it minimizes $\|K^{-1/2} h_n(u;\theta)\|$ 
w.r.t. $\theta$, where $\|\cdot\|$ is a norm 
and $h_n(u;\theta)=\sum_{j=1}^n h(u,X_j;\theta)$. 
Thus the CGMM estimator can be viewed as a continuous version of \eqref{def:gmm} (notice that 
$K$ plays a role of $\Sigma$). The idea behind is to pursuit efficiency by capturing more information (\cite[Sec. 5.2]{carrasco:florens:2000}). 
A problem of CGMM is that an estimate of optimal $K$ is not invertible in general.
This is dealt with a regularization parameter introduced in the estimation of $K^{-1/2}$. \\ 
$(\mathrm{ii})$ 
The EL estimator \cite{kunitomo:owada:2006} also relies on the moment conditions $\E_{\theta_0}[g(X) -\gamma(\theta_0)]=0$ and 
maximizes the empirical likelihood subject to the moment conditions. Similar to the GMM estimator 
it suffers from a severe problem of degeneracy if moment conditions are too many, because it is a one-step estimator that implicitly 
estimates the optimal weight matrix. 
We cannot use $\Sigma(\tilde{\theta})$ as the weight matrix for the EL estimator.
\begin{remark}
The models considered by \cite{singleton:2001} and \cite{carrasco:chrnov:florens:ghysels:2007} are different from ours.
 They treat more complicated financial models such as affine price models. 
 It is not certain if $\Sigma(\tilde{\theta})$ is non-singular for distributions other than the stable law when the grid of evaluation points  is fine. 
  Moreover, the condition of our lemma is violated if symmetric grid points $(-u_k,u_k)_{k\ge 1}$ are selected as in \cite{singleton:2001}. 
\end{remark}

\section{Asymptotics of trigonometric approximated maximum likelihood estimation}
\label{sec:asymptotics}
This section investigates asymptotic properties of TMLE.
The asymptotics of TMLE 
with arbitrary fixed evaluation points are studied in Subsection \ref{subsec:asymptotic}.
In Subsection \ref{subsec:asymptQMLinfty} 
we consider the case where the number of points increases. We
show that the TML estimator 
converges to the ML estimator 
in the sense that they have the same 
asymptotic law. The asymptotics of other related estimators are discussed at the end of this section.

\subsection{Asymptotics of TMLE when the number of points is fixed}
\label{subsec:asymptotic}
The TML estimator with fixed evaluation points could be regarded as a $Z$-estimator, and we exploit theorems from
van der Vaart \cite[Theorems 5.41 and Theorem 5.42]{vandervaat:2000}. 
The following is the first main result. 

\begin{theorem}
\label{thm:asymptocs:fixedk}
Let $C$ be any compact subset of $\mathring{\Theta}$ 
and let $\theta_0 \in \mathring{C}$ be a true parameter. 
Suppose that evaluation points $(u_1,\ldots,u_k)\in \R^k$ of $g$ satisfy $|u_i|\neq |u_j|$ for $i\neq j$ and $u_i \neq 0$ for all $i$.
Then the probability that $\wt S_n(\theta)=0$ has at least one root tends to $1$ as $n \to \infty$, 
and there exists a sequence of roots $\hat \theta_n$ such that $\hat \theta_n\to \theta_0$ in probability.  
Moreover, every consistent sequence of roots $\hat \theta_n$ satisfies  
\begin{align}
\label{asympt:linear:expression}
 \sqrt{n}\, (\hat \theta_n-\theta_0) = \wt I(\theta_0)^{-1}
 \frac{1}{\sqrt{n}} \sum_{j=1}^n \wt S(X_j;\theta_0)+o_p(1). 
\end{align}
In particular, the sequence $\sqrt{n}\,(\hat \theta_n-\theta_0)$ is
 asymptotically normal with mean zero and covariance matrix $\wt
 I(\theta_0)^{-1}$.
\end{theorem}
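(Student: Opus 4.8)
The plan is to treat $\hat\theta_n$ as a $Z$-estimator for the estimating equation $\wt S_n(\theta)=0$ and to verify the hypotheses of van der Vaart's $Z$-estimator theorems (Theorems 5.41 and 5.42 in \cite{vandervaat:2000}). First I would identify the population estimating function. Because $\E_{\theta_0}[g(X)]=\gamma(\theta_0)$ by \eqref{def:gamma}, the map $\wt S$ of \eqref{def:TMLE} has the explicit population counterpart
\begin{align*}
\Psi(\theta):=\E_{\theta_0}[\wt S(X;\theta)]=\gamma_\theta(\theta)\,\Sigma(\theta)^{-1}\big(\gamma(\theta_0)-\gamma(\theta)\big),
\end{align*}
which vanishes at $\theta=\theta_0$. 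This \emph{Fisher consistency} pins $\theta_0$ as a zero of the limiting equation and is the starting point both for the existence of a consistent root and for the expansion \eqref{asympt:linear:expression}.

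The next step is to check the regularity conditions, which are light here because every building block is bounded. The coordinates of $g$ are trigonometric functions bounded by $1$, while $\gamma(\theta)$, $\gamma_\theta(\theta)$ and the entries of $\Sigma(\theta)^{-1}$ are smooth and bounded in $\theta$ on the compact set $C$; smoothness of $\theta\mapsto\Sigma(\theta)^{-1}$ follows from that of $\Sigma$ together with its invertibility on $C$, guaranteed by Lemma \ref{lem:auxiliary1}(ii). Consequently $\theta\mapsto\wt S(x;\theta)$ is continuously differentiable with a $\theta$-derivative dominated, uniformly in $x$ and in $\theta\in C$, by a constant. This single observation discharges all the domination requirements and, by continuity on the compact $C$, yields the uniform law of large numbers $\sup_{\theta\in C}\big|\wt S_n(\theta)-\Psi(\theta)\big|\cip 0$ and the analogous statement for the matrix of $\theta$-derivatives.

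The third step is the Jacobian at $\theta_0$. Differentiating the closed form of $\Psi$ and using that the factor $\gamma(\theta_0)-\gamma(\theta)$ vanishes at $\theta_0$, so that only the term in which this factor is differentiated survives, gives
\begin{align*}
\frac{\partial\Psi}{\partial\theta}\Big|_{\theta_0}=-\gamma_\theta(\theta_0)\,\Sigma(\theta_0)^{-1}\gamma_\theta(\theta_0)^\top=-\wt I(\theta_0),
\end{align*}
in agreement with Remark \ref{rem:sec:triscore}(i). Its nonsingularity is exactly what Lemma \ref{lem:auxiliary1} supplies: since $\Sigma(\theta_0)^{-1}$ is positive definite and $\gamma_\theta(\theta_0)$ has full row rank $4$, the $4\times4$ matrix $\wt I(\theta_0)=\gamma_\theta(\theta_0)\Sigma(\theta_0)^{-1}\gamma_\theta(\theta_0)^\top$ is positive definite, hence invertible.

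Finally I would assemble these ingredients. Existence of a root with probability tending to $1$ and of a consistent sequence $\hat\theta_n\cip\theta_0$ follows from the existence part of the $Z$-estimator machinery, using $\Psi(\theta_0)=0$, the invertibility of the Jacobian above, and the uniform convergence just established. For any consistent root, a one-term Taylor expansion about $\theta_0$,
\begin{align*}
0=\wt S_n(\hat\theta_n)=\wt S_n(\theta_0)+\dot{\wt S}_n(\bar\theta_n)(\hat\theta_n-\theta_0),\qquad \bar\theta_n\ \text{on the segment},
\end{align*}
together with $\dot{\wt S}_n(\bar\theta_n)\cip-\wt I(\theta_0)$ (uniform LLN plus $\bar\theta_n\cip\theta_0$), yields \eqref{asympt:linear:expression} after inverting the Jacobian. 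The classical CLT applied to the i.i.d., bounded, mean-zero summands $\wt S(X_j;\theta_0)$, whose covariance is $\wt I(\theta_0)$ by \eqref{hessian:tmle}, gives $n^{-1/2}\sum_j\wt S(X_j;\theta_0)\cid N(0,\wt I(\theta_0))$, so that $\sqrt n(\hat\theta_n-\theta_0)\cid N(0,\wt I(\theta_0)^{-1}\wt I(\theta_0)\wt I(\theta_0)^{-1})=N(0,\wt I(\theta_0)^{-1})$, the sandwich collapsing precisely because the bread and the meat coincide. The main obstacle is not the Taylor/CLT steps, which are mechanical once consistency is in hand, but the two structural facts underpinning the scheme: the nonsingularity of the limiting Jacobian $\wt I(\theta_0)$, which is not automatic for an arbitrary grid and is exactly the content of Lemma \ref{lem:auxiliary1}, and the existence of a consistent root, which rests on the local homeomorphism argument behind the cited theorem rather than on a bare expansion.
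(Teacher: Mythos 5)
Your proposal is correct and follows essentially the same route as the paper: both treat $\hat\theta_n$ as a $Z$-estimator, verify the hypotheses of van der Vaart's Theorems 5.41 and 5.42 (Fisher consistency, square-integrability, domination of the $\theta$-derivatives via boundedness of the trigonometric basis on the compact set $C$, and nonsingularity of the limiting Jacobian $\pm\wt I(\theta_0)$ supplied by Lemma \ref{lem:auxiliary1}), and then read off the linear expansion and the CLT with the sandwich collapsing because $\var_{\theta_0}(\wt S(X;\theta_0))=\wt I(\theta_0)$. Your explicit closed-form computation of the population function $\Psi$ and its Jacobian $-\wt I(\theta_0)$ is a slightly cleaner packaging of what the paper does through its derivative formula, but it is the same argument.
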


Theorem \ref{thm:asymptocs:fixedk} does not rule out the existence of multiple roots.
However, non-uniqueness of solutions is not a serious problem because we can utilize existing consistent estimators for $\theta_0$ such 
as the quantile-based estimator of McCulloch\cite{mcculloch:1986}.
We can find the consistent root by choosing the root closest to a preliminary consistent estimator (see p.70 of \cite{vandervaat:2000}).

The proof of Theorem \ref{thm:asymptocs:fixedk} is given in Appendix \ref{sec:append:pf:thm:asymptocs:fixedk}. 
Here we make a remark about the idea behind the proof.

\begin{remark}
The TML estimator could be regarded as a $Z$-type estimator whose estimating equation approximates 
the first-order condition of MLE.
If the estimator converges to a local minimum/maximum of the likelihood function, then consistency fails.
We can avoid this at least locally 
through the condition $[\, |u_i|\neq |u_j|\, \&\,  u_i \neq 0\,]$ $($see  
Lemma \ref{lem:auxiliary1}$)$, that is, the condition that for sufficiently large samples,
the Hessian 
of TMLE $($derivative of $-\wt S (\theta)$ w.r.t. $\theta)$ 
is positive definite in a neighborhood of true parameter $\theta_0$ 
$($cf. Remark \ref{rem:sec:triscore} $(\mathrm{i})$$)$.
Thus asymptotically and locally we can choose a unique zero which maximizes  
the likelihood function.  
\end{remark}

\subsection{Asymptotics of TMLE when the number of points goes to infinity}
\label{subsec:asymptQMLinfty}
We show that the TML estimator converges 
to the ML estimator in the sense that they have the same asymptotic law (Theorem \ref{thm:asympeffi}).
Throughout this subsection,
we take equally spaced evaluation points $(u_1,u_2,\ldots,u_k)=(\tau,2\tau,\ldots, k \tau)$ 
and study the limit behavior of TMLE when the number $k$ increases to 
infinity, while
the interval length $\tau>0$ goes to zero.

Referring to Brant \cite{brant:1984},
we first describe that TSF can be regarded as a two-step
approximation of the true score function. 
From this one can grasp how the limit operation $k\to \infty$ and $\tau\to 0$ works, which 
is also useful to follow the proofs.
The first step is based on a wrapping of $f(x;\theta)$, 
\begin{align}
\label{poissonsum}
 f_\tau(x;\theta) = \sum_{j=-\infty}^\infty f\,\Big(x+
\frac{2\pi j}{\tau};\theta
\Big),\quad \tau>0
\end{align}
from which we construct  
a wrapped version of the score function ($4\times 1$ vector) 
\begin{align}
\label{Stau}
 S_\tau (x;\theta) = \frac{\frac{\partial}{\partial
 \theta}f_\tau(x;\theta)}{f_\tau(x;\theta)} \left(=:
 \frac{f_{\tau,\theta}(x;\theta)}{f_\tau (x;\theta)} \right).
\end{align}
Notice that both $f_\tau$ and $S_\tau$ are periodic functions with period $2\pi/\tau$. 
By letting $\tau \to 0$ we obtain convergences $f_\tau \to f$ and $S_\tau \to S$ (Lemmas \ref{lem:tauapprox:derivatives} and \ref{lem:uniconti:s}).

The second approximation step is the projection of $S_\tau(x;\theta)$ into the 
subspace of $L^2(f_{\tau})$ that is spanned by trigonometric functions,
as done in Section \ref{sec:qscore}. 
Here, $L^2(f_{\tau})$ 
is the $L^2$ space on
$[-\pi/\tau,\,\pi/\tau]:=I_\tau$, 
whose inner product is given by 
\[
 \langle h_1,h_2 \rangle_{\tau,\theta} = \E_{\tau,\theta} [h_1\bar h_2]= 
\int_{I_\tau} h_1(x) \bar h_2(x)
 f_\tau(x;\theta) dx,\quad h_1,\,h_2\in L^2(f_\tau).  
\]
Now setting $(u_1,\ldots,u_k)=(\tau,2\tau,\ldots,k\tau)$ in the previous
definitions \eqref{def:gamma} and \eqref{def:deriv:gamma}, we obtain 
\[
 \E_{\tau,\theta} [(g(X)-\gamma(\theta))\, (g(X)-\gamma(\theta))^\top] 
 =\Sigma(\theta)\quad \text{and}\quad \E_{\tau,\theta} [S_\tau(X;\theta)\,(g(X)-\gamma(\theta))^\top ]
 =\gamma_\theta (\theta).
\]
Thus the projection coincides with TSF $\wt S$ with equidistant evaluation points.
Since $S_\tau$ is periodic, its projection on trigonometric functions will be close to $S_\tau$ itself for a sufficiently large $k$.
We can rigorously prove the convergence $\wt S\to S_\tau$ as $k\to \infty$ (Proposition \ref{prop:upperbounds}). 
Therefore, we obtain $\wt S \to S$ as $k \to \infty$ and $\tau \to 0$.  
\begin{remark}
Notice that TSF 
is originally obtained by the
projection of the true score function $f_\theta(x;\theta)/f(x;\theta)$ into 
the subspace of the $L^2(f)$ space spanned by $\{1,g(x)\}$. 
The equivalence of the two procedures 
follows from the same logic as that 
the Fourier coefficients of a wrapped function are 
equivalent to the Fourier transform of $f_\theta (x;\theta)$ at the corresponding
points $k\tau,\,k\in \N$ $($see also \cite[p.993]{brant:1984}$)$, that is, for $t=k \tau $,
\[
 \E_{\tau,\theta} [S_\tau(X;\theta)e^{itX}]=\int_{I_\tau} e^{itx} f_{\tau,\theta} (x;\theta) dx =\int_{-\infty}^\infty e^{itx} f_\theta(x;\theta) dx =\E_\theta [S (X;\theta) e^{itX}]
= \varphi_\theta(t;\theta). 
\]
In the proofs of this subsection, we implicitly use properties of Fourier series expansion of the
wrapped version $S_\tau(x;\theta)$ $($e.g. Theorems \ref{thm65:brant:1984} and \ref{thm66:brant:1984}$)$. 
Thus we take equidistant points on $g$ and consider 
$L^2(f_\tau)$. 
\end{remark}

It is not straight forward to extend Theorem \ref{thm:asymptocs:fixedk} to the case of $k \to \infty$ and $\tau \to 0$.
The difficulty stems from the fact that the proof of the theorem evaluates the second-order derivatives of $\wt S(x; \theta)$ with 
respect to $\theta$, which is quite complicated when $k \to \infty$ and $\tau \to 0$. Instead we apply \cite[Theorem 5.7]{vandervaat:2000}, 
the assertion of which is weaker than that of Theorem \ref{thm:asymptocs:fixedk} but assures that sequences $\hat \theta_n$ of 
(approximate) zeros of $\wt S_n(\theta)$ include a consistency sequence.

Let $\bar B_\delta(\theta_0)=\{\theta : |\theta - \theta_0| \le \delta \}$ be a closed ball with center $\theta_0$ and radius $\delta$.
We have the following theorem.

\begin{theorem}
\label{thm:consistency}
Let $C$ be any compact subset of $\mathring{\Theta}$ and $\theta_0\in \mathring{C}$ be a
 true parameter.
Then, for any $k$ and $\tau$, there exists a sufficiently small $\delta>0$ such that 
any sequence of estimators $\tilde \theta_n$ satisfying $\tilde \theta_n \in \bar B_\delta(\theta_0) \cap C$ for all sufficiently large $n$ and 
$|\wt S_n(\tilde \theta_n)|=o_p(1)$ converges in probability to $\theta_0$.  
 The result holds even when $\tau \to 0$ if $k^{-1}=o(\tau^{2+\alpha_0+\delta})$. 
\end{theorem}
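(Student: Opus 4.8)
The plan is to apply the consistency result for approximate $M$-estimators, \cite[Theorem 5.7]{vandervaat:2000}, to the criterion $M_n(\theta):=-|\wt S_n(\theta)|$ and its deterministic counterpart $M(\theta):=-|m(\theta)|$, where
\begin{align*}
 m(\theta):=\E_{\theta_0}[\wt S(X;\theta)]=\gamma_\theta(\theta)\,\Sigma(\theta)^{-1}\big(\gamma(\theta_0)-\gamma(\theta)\big),
\end{align*}
the last identity following from \eqref{def:TMLE} and $\E_{\theta_0}[g(X)]=\gamma(\theta_0)$ in \eqref{def:gamma}. Since $m(\theta_0)=0$, the true parameter is a zero of the population estimating function. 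Because $\tilde\theta_n$ satisfies $|\wt S_n(\tilde\theta_n)|=o_p(1)$ and, by the ordinary law of large numbers applied to the bounded summands, $|\wt S_n(\theta_0)|=o_p(1)$ as well, the restricted sequence $\tilde\theta_n$ is an approximate maximizer of $M_n$ on $\bar B_\delta(\theta_0)\cap C$ in the sense required by that theorem.

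First I would verify the local well-separation of the maximizer. Differentiating $m$ and using that the factor $\gamma(\theta_0)-\gamma(\theta)$ vanishes at $\theta_0$, only the term in which the derivative strikes this factor survives, giving
\begin{align*}
 \frac{\partial m(\theta)}{\partial\theta}\bigg|_{\theta=\theta_0}
 = -\gamma_\theta(\theta_0)\,\Sigma(\theta_0)^{-1}\gamma_\theta(\theta_0)^\top=-\wt I(\theta_0),
\end{align*}
with $\wt I$ as in \eqref{hessian:tmle}. By Lemma \ref{lem:auxiliary1}, $\gamma_\theta(\theta_0)$ has full row rank $4$ and $\Sigma(\theta_0)$ is positive definite, so $\wt I(\theta_0)$ is positive definite and this Jacobian is nonsingular. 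Hence for $\delta>0$ small enough $\theta_0$ is the unique zero of $m$ on $\bar B_\delta(\theta_0)\cap C$, and for every $\eps>0$ one has $\inf\{|m(\theta)|:\eps\le|\theta-\theta_0|,\ \theta\in\bar B_\delta(\theta_0)\cap C\}>0$, i.e. $\sup_{\theta:\,|\theta-\theta_0|\ge\eps}M(\theta)<M(\theta_0)=0$ over the restricted set.

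Next I would establish the uniform law of large numbers $\sup_{\theta\in\bar B_\delta(\theta_0)\cap C}|\wt S_n(\theta)-m(\theta)|\stp 0$. Here the trigonometric structure pays off: the entries of $g(x)$ are bounded, while $\gamma(\theta)$, $\gamma_\theta(\theta)$ and $\Sigma(\theta)^{-1}$ are continuous on the compact set (with $\Sigma(\theta)^{-1}$ well defined by Lemma \ref{lem:auxiliary1}), so $\wt S(\cdot;\theta)$ admits an integrable envelope and is continuous in $\theta$; the class $\{\wt S(\cdot;\theta):\theta\in\bar B_\delta(\theta_0)\cap C\}$ is therefore Glivenko--Cantelli. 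Combining this uniform convergence with the separation of the previous step and invoking \cite[Theorem 5.7]{vandervaat:2000} yields $\tilde\theta_n\stp\theta_0$ for fixed $k$ and $\tau$.

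Finally, for the regime $k\to\infty$, $\tau\to0$ I would track the two-step approximation $\wt S\to S_\tau\to S$ quantified by Proposition \ref{prop:upperbounds}. The rate condition $k^{-1}=o(\tau^{2+\alpha_0+\delta})$ is exactly what renders the projection error $\wt S-S_\tau$ uniformly negligible relative to the wrapping scale, so that $m(\theta)\to\E_{\theta_0}[S(X;\theta)]$ and $\wt I(\theta_0)\to I(\theta_0)$, the true Fisher information, uniformly on the ball; since $I(\theta_0)$ is positive definite the limiting Jacobian stays nonsingular and the separation constant stays bounded away from zero, so the argument survives the limit. The main obstacle I anticipate is precisely this uniform-in-$(k,\tau)$ control: as $k$ grows the dimension of $\Sigma(\theta)$ grows and the envelope of $\wt S$ could a priori blow up, so I would lean on the bounds of Proposition \ref{prop:upperbounds} together with the stated rate to keep both the Glivenko--Cantelli step and the separation step uniform while the tail of the Fourier series, whose decay is governed by $\alpha_0$, is absorbed.
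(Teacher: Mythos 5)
Your proposal follows essentially the same route as the paper's proof: both reduce the claim to van der Vaart's Theorem 5.7 by verifying (a) well-separation of the zero of $\E_{\theta_0}[\wt S(X;\theta)]$ via the nonsingularity of $\wt I(\theta_0)$ from Lemma \ref{lem:auxiliary1} (your explicit Jacobian computation of $m(\theta)$ is just the expanded form of the paper's appeal to that lemma and continuity of $\wt I$, and in the limit regime both use $\wt I\to I$ with $I(\theta_0)$ positive definite), and (b) a continuity-plus-integrable-envelope (Glivenko--Cantelli) argument for uniform convergence, with the $k\to\infty$, $\tau\to 0$ case handled through the two-step approximation $\wt S\to S_\tau\to S$ of Proposition \ref{prop:upperbounds} and Lemma \ref{lem:uniconti:s} under the stated rate. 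The only substantive difference is one of completeness, not of method: the paper actually carries out the uniform-in-$(k,\tau)$ envelope bound (the $c+c'\log|x|$ estimate obtained by bounding the wrapped remainder $R_3$ in \eqref{wtS:decomp2}, not just the projection error of Proposition \ref{prop:upperbounds}), which is precisely the obstacle you flag but leave unresolved.
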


The problem of TMLE is that $|\wt S_n(\theta)|$ potentially has multiple local minima.
However, $|\wt S_n(\theta)|$ asymptotically has a unique minimum in a neighborhood of $\theta_0$.
The theorem states that if we restrict the parameter space in a neighborhood of $\theta_0$, then any sequence of estimators $(\tilde{\theta}_n)$ that satisfies $|\wt S_n(\tilde{\theta}_n)|=o_p(1)$ is consistent for $\theta_0$.

Theorem \ref{thm:consistency} immediately implies the existence of a consistent sequence of TMLE.
Indeed, let $\tilde \theta_n = \arg \min_{\theta \in \bar B_\delta(\theta_0) \cap C} |\wt S_n(\theta)|$ for a sufficiently small $\delta >0$.
Then, from the proof of Theorem \ref{thm:consistency}, we see that $\tilde \theta_n$ satisfies $|\wt S_n(\tilde \theta_n)|=o_p(1)$, and thus that $\tilde \theta_n$ is consistent.
We emphasize that the introduction of $\bar{B}_\delta(\theta_0)$ is only for theoretical exposition.
By the existence of a preliminary consistent estimator, we can obtain a consistent sequence of TMLE without knowing $\bar{B}_\delta(\theta_0)$.

\begin{remark}
\label{remark:consistency:TMLE}
$(\mathrm{i})$ 
Similarly as in the fixed-points case, 
we can choose a sequence of consistent roots $\hat \theta_n$ of $\wt S_n(\theta)=0$.  
Practically, we use a variant of the Newton--Raphson algorithm with a consistent initial value. 
See Section \ref{sec:numerical} for the detail.
\\
$(\mathrm{ii})$ 
We observe in the proof that under the condition of $\tau$ and $k$ in Theorem \ref{thm:consistency}, 
$\wt S(x;\theta)$ converges to the true score $S(x;\theta)$ as $\tau \to 0$.  
By this fact, consistency of $\hat \theta_n$ is kept even in the limit $\tau \to 0$.  \\
$(\mathrm{iii})$ 
The condition $k^{-1}=o(\tau^{2+\alpha_0+\delta})$ of Theorem \ref{thm:consistency} depends on an unknown true parameter $\alpha_0$ and $\delta$. 
Because $\delta$ can be arbitrarily small, there exists $\delta$ such that $\alpha_0 + \delta \le 2$ for any $\theta_0 \in \mathring{C}$.
Then, the condition is satisfied if $k^{-1}=o(\tau^4)$ although using too many evaluation points is computationally demanding.
In actual implementation the dependence on $\alpha_0$ is not a major issue, see Remark \ref{rem:choice:tau:k}. 
Notice that the condition of $k$ and $\tau$ for consistency is included in 
that for asymptotic normality, see Theorem \ref{thm:asympeffi}. 
\end{remark}

Next we study the asymptotic normality and efficiency of the TML estimator, taking the consistent sequence $(\hat \theta_n)$ 
of TMLE. 
For this purpose, first, we evaluate the distance between the score of TML $\wt S(x;\theta)$ 
and that of MLE $S(x;\theta)$, which can be $o(n^{-1/2})$ depending on $k,\,\tau$  
(Lemma \ref{lemma:asymptoticnormal}). 
Then, we evaluate the distance between 
$\sqrt{n}(\hat \theta_n-\theta_0)$ and $\sqrt{n}(\tilde \theta_n-\theta_0)$, whose expressions are given by scores 
$\wt S$ and $S$ respectively, and show that it converges to $0$ in probability. 

\begin{theorem}
\label{thm:asympeffi} 
 Let $C$ be any compact set on $\mathring{\Theta}$ and $\theta_0\in \mathring{C}$ be a
 true parameter. 
 Denote the ML estimator and the consistent TML estimator 
 respectively by $\tilde \theta_n$ and $\hat \theta_n$. 
 If $k\to \infty$ and $\tau\to 0$ such that $n^{1/2}\big\{
 \tau^{\alpha_0} \log 1/\tau + (\tau^{2+\alpha_0+\delta}k)^{-1/2}
 \big\}\to 0$ holds for $\delta>0$ of Theorem \ref{thm:consistency}, then $\sqrt{n}\,(\hat \theta_n -\tilde
 \theta_n)=o_p(1)$; thus, $\hat \theta_n$ has the same asymptotic
 distribution as $\tilde \theta_n$. 
\end{theorem}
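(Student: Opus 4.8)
The plan is to obtain, for each estimator separately, a first-order asymptotic linear representation in terms of its own score, and then to show that the two representations agree up to $o_p(1)$. For the exact ML estimator the representation
\[
\sqrt{n}\,(\tilde\theta_n-\theta_0)=I(\theta_0)^{-1}\frac{1}{\sqrt{n}}\sum_{j=1}^n S(X_j;\theta_0)+o_p(1)
\]
is classical (DuMouchel \cite{dumouchel:1973}, Matsui \cite{matsui:2019}), with $I(\theta_0)=\var_{\theta_0}(S(X;\theta_0))$ the Fisher information. For the TML estimator I would aim at the analogue
\[
\sqrt{n}\,(\hat\theta_n-\theta_0)=\wt I(\theta_0)^{-1}\frac{1}{\sqrt{n}}\sum_{j=1}^n \wt S(X_j;\theta_0)+o_p(1),
\]
now with $k\to\infty,\ \tau\to0$, where $\wt I(\theta_0)$ is defined in \eqref{hessian:tmle} and, as recorded in the proof of Theorem \ref{thm:consistency}, $\wt I(\theta_0)\to I(\theta_0)$. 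Granting both representations, the theorem reduces to showing that their right-hand sides coincide up to $o_p(1)$.

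For that comparison I would split
\[
\sqrt{n}\,(\hat\theta_n-\tilde\theta_n)=\wt I(\theta_0)^{-1}\frac{1}{\sqrt{n}}\sum_{j=1}^n\bigl(\wt S(X_j;\theta_0)-S(X_j;\theta_0)\bigr)+\bigl(\wt I(\theta_0)^{-1}-I(\theta_0)^{-1}\bigr)\frac{1}{\sqrt{n}}\sum_{j=1}^n S(X_j;\theta_0)+o_p(1).
\]
The second term is immediate: $\frac{1}{\sqrt n}\sum_j S(X_j;\theta_0)=O_p(1)$ by the ordinary central limit theorem, while $\wt I(\theta_0)^{-1}-I(\theta_0)^{-1}\to 0$ since $\wt I(\theta_0)\to I(\theta_0)$ and both are nonsingular (Lemma \ref{lem:auxiliary1} together with $\wt I\to I$), so the product is $o_p(1)$. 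The first term is decisive. Because $\E_{\theta_0}[\wt S(X;\theta_0)]=\E_{\theta_0}[S(X;\theta_0)]=0$, I would bound it crudely by
\[
\Bigl|\frac{1}{\sqrt n}\sum_{j=1}^n\bigl(\wt S(X_j;\theta_0)-S(X_j;\theta_0)\bigr)\Bigr|\le \sqrt{n}\,\frac{1}{n}\sum_{j=1}^n\bigl|\wt S(X_j;\theta_0)-S(X_j;\theta_0)\bigr|,
\]
whose right-hand side is $O_p\bigl(\sqrt{n}\,\E_{\theta_0}|\wt S-S|\bigr)$ by Markov's inequality. Lemma \ref{lemma:asymptoticnormal} bounds $\E_{\theta_0}|\wt S-S|$ by a constant times $\tau^{\alpha_0}\log(1/\tau)+(\tau^{2+\alpha_0+\delta}k)^{-1/2}$, so multiplying by $\sqrt n$ and invoking precisely the hypothesis $n^{1/2}\{\tau^{\alpha_0}\log(1/\tau)+(\tau^{2+\alpha_0+\delta}k)^{-1/2}\}\to0$ makes this term $o_p(1)$; this is exactly where the $n^{1/2}$-scaling of the rate condition is consumed. (Exploiting the mean-zero property through $\var\bigl(\tfrac{1}{\sqrt n}\sum_j(\wt S-S)\bigr)=\|\wt S-S\|_{L^2(f)}^2$ would require only $\|\wt S-S\|_{L^2(f)}\to0$; I keep the cruder $L^1$ bound to match the stated hypothesis.)

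The main obstacle I anticipate is establishing the TML representation itself when $k\to\infty$ and $\tau\to0$, because the fixed-$k$ route of Theorem \ref{thm:asymptocs:fixedk} linearizes through the second-order $\theta$-derivatives of $\wt S$, which blow up as the number of trigonometric functions grows — this is precisely the difficulty flagged before Theorem \ref{thm:consistency}. To sidestep it I would use an anchoring argument rather than an independent linearization: first show that the ML estimator is itself an approximate zero of the empirical TSF, namely $\sqrt n\,\wt S_n(\tilde\theta_n)=o_p(1)$, via
\[
\sqrt{n}\,\wt S_n(\tilde\theta_n)=\sqrt{n}\,\bigl(\wt S_n(\tilde\theta_n)-S_n(\tilde\theta_n)\bigr)+\sqrt{n}\,S_n(\tilde\theta_n),
\]
where the second summand vanishes by the first-order condition of MLE and the first is controlled using consistency of $\tilde\theta_n$ (so $\tilde\theta_n\in\bar B_\delta(\theta_0)$ eventually, Theorem \ref{thm:consistency}), a uniform version of Lemma \ref{lemma:asymptoticnormal}, and the same rate condition. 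Since $\hat\theta_n$ and $\tilde\theta_n$ are then both consistent approximate zeros of the single estimating map $\wt S_n$, a $Z$-estimator linearization of $\wt S_n$ at $\theta_0$ — whose derivative converges to the nonsingular $-\wt I(\theta_0)$ by Remark \ref{rem:sec:triscore}(i) and Lemma \ref{lem:auxiliary1} — assigns both the common leading term $\wt I(\theta_0)^{-1}\frac{1}{\sqrt n}\sum_j\wt S(X_j;\theta_0)$, so their difference cancels and $\sqrt n(\hat\theta_n-\tilde\theta_n)=o_p(1)$ follows without comparing $\sum\wt S$ to $\sum S$ directly. The delicate point in this route is the uniform stochastic equicontinuity of $\theta\mapsto\wt S_n(\theta)$ on a neighborhood of $\theta_0$ as $k$ grows, for which Lemma \ref{lemma:asymptoticnormal} must be combined with a control of the entropy of the trigonometric basis.
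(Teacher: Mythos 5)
Your anchoring step --- showing $\sqrt{n}\,\wt S_n(\tilde\theta_n)=o_p(1)$ via the ML first-order condition, consistency of $\tilde\theta_n$, Lemma \ref{lemma:asymptoticnormal} and Markov's inequality --- is exactly how the paper's proof begins (note that Lemma \ref{lemma:asymptoticnormal} already contains the supremum over $B_\varepsilon(\theta_0)$ inside the expectation, so no separate ``uniform version'' is needed), and your final comparison algebra would be fine \emph{if} both asymptotic linear representations were in hand. The genuine gap is the step you yourself flag and then defer: the claim that a ``$Z$-estimator linearization of $\wt S_n$ at $\theta_0$'' with derivative $-\wt I(\theta_0)$ is available when $k\to\infty$ and $\tau\to 0$. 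This is precisely what the paper says cannot be done by the fixed-$k$ route of Theorem \ref{thm:asymptocs:fixedk} (it requires second-order $\theta$-derivatives of $\wt S$, which are uncontrolled as $k$ grows), and the master theorem \cite[Theorem 5.21]{vandervaat:2000} does not apply as stated because $\wt S(\cdot\,;\theta)$ is a triangular array: the function class changes with $n$ through $k_n,\tau_n$, so its stochastic equicontinuity and the differentiability of $\theta\mapsto\E_{\theta_0}[\wt S(X;\theta)]$ would have to be established uniformly in $k,\tau$. Your proposed remedy, ``control of the entropy of the trigonometric basis,'' is never carried out, and it is not the mechanism that makes the theorem true.

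The paper circumvents this obstacle rather than attacking it: it never differentiates $\wt S$ in $\theta$ and never proves a Donsker property for the growing class. Instead, the uniform $L^1$ bound of Lemma \ref{lemma:asymptoticnormal}, Markov's inequality, and the rate hypothesis are used to replace $\wt S_n$ by $S_n$ and $\E_{\theta_0}[\wt S(X;\cdot)]$ by $\E_{\theta_0}[S(X;\cdot)]$ inside every empirical-process quantity (the terms $I_1,I_3$ in the proof of \eqref{conv:donsker}); what remains, $I_2$, involves only the \emph{fixed} true-score class $\{S(\cdot\,;\theta)\}$, which is Donsker by the Lipschitz/envelope argument from \cite{matsui:2019}. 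The linearization is then performed on the population map $\theta\mapsto\E_{\theta_0}[S(X;\theta)]$ via the delta method, whose derivative is $-I(\theta_0)$ --- not $-\wt I(\theta_0)$ --- and after a separate $\sqrt{n}$-consistency argument one obtains $n^{1/2}I(\theta_0)(\hat\theta_n-\theta_0)=n^{1/2}\wt S_n(\theta_0)+o_p(1)$, which is compared with the ML representation using the lemma once more to conclude $n^{1/2}(\wt S_n(\theta_0)-S_n(\theta_0))=o_p(1)$. In short, your outline assembles the right ingredients (anchoring, Lemma \ref{lemma:asymptoticnormal}, the rate condition) but leaves unproved the one step that is actually hard; the paper's proof shows that this step can be avoided entirely by transferring the equicontinuity burden from the $k$-dependent projected score to the true score.
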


Theorem \ref{thm:asympeffi} states that the TML estimator is asymptotically normal and efficient because 
it inherits the asymptotic properties of the ML estimator \cite{matsui:2019}.
As far as we know, our result is the first to rigorously establish the asymptotic normality and efficiency 
of the estimator based on the countable points of the ch.f. 
Here we see the difference between our result and previous results. 

Previous studies show that the asymptotic variance of the GMM estimator, which is obtained under fixed $k$ and $\tau$, can be arbitrarily close to 
the Cr\'amer--Rao lower bound 
when $k$ is large and $\tau$ is small (\cite{feuerverger:mcDunnough:1981a, yu:2004}).
In other words, they employ a sequential asymptotic framework, 
where the limit w.r.t. $n$ is taken first and then the limit w.r.t. $k$ and $\tau$ is taken. 
However, the sequential asymptotic theory does not provide a good approximation to the finite sample distribution in general.
If the number of moment conditions is too large, the GMM estimator does not satisfy asymptotic normality (see, for instance, \cite{donald:imbens:newey:2003} 
and references therein).

Kunitomo and Owada \cite{kunitomo:owada:2006} showed that the EL estimator satisfies asymptotic normality 
if the number of moment conditions grows slowly depending on the sample size.
However, their asymptotic framework is also different from ours. Their proof consists of two steps.
First, they choose $\tau =K/k$ for fixed $K >0$, and prove that the EL estimator converges in distribution 
to a normal distribution if $k$ satisfies $k=O(n^{\frac{1}{3}-\eta})$ with $\eta >0$, so that $k$ is not independent of the sample size $n$. 
Then, they show that the asymptotic variance converges to the Cr\'amer--Rao lower bound as $K \to \infty$.
Therefore, they do not specify the condition for the relation between $k$ and $\tau$ under which the EL estimator 
is asymptotically normal and efficient. 

We close this section with a brief discussion of finite sample behaviors. 
As stated 
above 
the number of moment conditions is  
limited by sample size  
for both GMM and EL estimators. 
Since the number should be increased for efficiency, those estimators could not be optimal in finite samples. 
The CGMM estimator avoids this deficiency and can exploit a full continuum of moment conditions. 
However, similar to the GMM estimator, it has the estimation step of the optimal covariance operator. 
In view of simulations, this step causes some efficiency loss. Notice that 
the EL estimator skips this step since it estimates both the parameter vector and the optimal weight all at once. 
Taking above facts into consideration, we consider TMLE. In the TML estimator, 
$\tau$ and $k$ can be arbitrarily small and large respectively, regardless of the sample size. 
Moreover even if $k$ is sufficiently large, no singularity is observed, that is, it 
just converges to the ML estimator as $k\to\infty$ and $\tau\to 0$. 
Notice that in view of the form \eqref{def:TMLE}, the explicit weight matrix $\gamma_\theta(\theta)\Sigma(\theta)^{-1}$ includes $\theta$, 
and thus TMLE implicitly estimates the weight matrix simultaneously with $\theta_0$.  

\begin{remark}
\label{rem:choice:tau:k}
	The condition on $\tau$ and $k$ in Theorem \ref{thm:asympeffi} depends on the unknown parameter $\alpha_0$.
	In actual implementation, the dependence on $\alpha_0$ is not a major issue.
	Our simulation shows that a use of $100$ evaluation points is sufficient for efficient estimation for any value of $\alpha_0$.
	Using many more evaluation points does not improve the efficiency of the estimator when $n=1000$.
	This is probably because the estimation error of MLE dominates the approximation error of TSF when $n$ is not so large.
\end{remark}

\begin{remark}
\label{rem:asympt:expgmm}
The explicit GMM estimator is well defined even when we 
increase the evaluation number $k$ of ch.f. depending on $\tau$. 
In particular we could take the equally spaced points as done in the TML estimator. 
However, it remains to be seen whether the asymptotic efficiency beyond a sequential framework holds 
in the limit of $k$ and $\tau$. Indeed for the continuous version of GMM $($see \cite[Equation (5.1)]{feuerverger:mcDunnough:1981a}$)$, which could be regarded 
as the limit of $k$ and $\tau$, it is shown that the optimal weight is not always integrable $($\cite[Equation (5.9)]{feuerverger:mcDunnough:1981a}$)$, 
and indeed we could not obtain it. Although it is not explicitly mentioned in \cite{feuerverger:mcDunnough:1981a}, 
by ignoring the integrability and proceeding with some calculations, 
the first-order condition of explicit GMM with the optimal weight leads to the likelihood equation. 
We stay with only two certain facts: $(a)$ The score of explicit GMM is expressed by a combination of trigonometric functions and thus 
it has a more distance to the score of MLE than that of TMLE does, $(b)$ If we estimate 
$\hat \theta^{k+1}_n$ by the explicit GMM with $\Sigma (\hat \theta_n^k)$ iteratively it becomes close to 
the TML estimator. Indeed, in a small simulation study, which is not reported here, 
we could not find a clear difference in the mean square errors between the TML estimator and
 the explicit GMM using $\Sigma(\tilde \theta_n)$ with $\tilde \theta_n$ a consistent estimator. 
\end{remark}

\section{Simulation results}
\label{sec:numerical}
The performance of ch.f.-based estimators is examined in this section.
We compare the TML estimator with the EL estimator and the CGMM estimator. See Subsection \ref{subsec:comparison} for 
definitions and relations of these estimators. 
All of the estimators are asymptotically efficient under certain conditions. 

To find a root of $\wt S_n(\theta)=0$ for TMLE, we utilize the method of scoring algorithm, which is a variant of the Newton--Raphson algorithm. 
The method replaces the derivative of the objective function with its expected value in the iteration process; that is, 
given a preliminary estimator $\hat{\theta}_n^0$, the sequence $(\hat{\theta}_n^l)_{l\ge 1}$ is renewed by
 \[
 \hat{\theta}_n^{l+1} = \hat{\theta}_n^l + \delta  \wt I (\hat{\theta}_n^l)^{-1} \wt S_n(\hat{\theta}_n^l),
 \]
where 
$\delta$ ($0 < \delta \le 1$) is a scalar that adjusts the step length.
The  
procedure is computationally stable because the positive definiteness of $\wt I (\theta)$ is guaranteed by Lemma \ref{lem:auxiliary1}.
If the initial value of the iteration is $\sqrt{n}$-consistent and $\delta =1$, then the one-step estimator $\hat{\theta}_n^1$ is asymptotically equivalent to the TML estimator (\cite[Theorem 5.45]{vandervaat:2000}).
However, we 
iterate the procedure until the sequence converges. 

To obtain the TML and EL estimators, we need to determine a vector of grid points 
$u = (u_1, \dots, u_k)$ for $g$.
The TML estimator uses $101$ equidistant points $u=(0.01, 0.06, \dots, 5.01)$.
The EL estimator uses 11 points $u=(0.1, 0.6, \dots, 5.1)$ when $\alpha \neq 1.9$ and 6 points $u=(0.1, 1.1, \dots, 5.1)$ when $\alpha = 1.9$.
The selection of the grid points for the EL estimator is similar to that of \cite{kunitomo:owada:2006}.
We use only 6 points when $\alpha =1.9$ because our simulation showed that the EL estimator with 11 grid points performs poorly when $\alpha \ge 1.7$. 

For the minimization procedure for CGMM, 
because solving the original minimization problem of \cite{carrasco:florens:2000} is computationally involved, 
we utilize Proposition 3.4 of \cite{carrasco:chrnov:florens:ghysels:2007}, 
which gives a simpler yet equivalent minimization problem. To evaluate the integrals used in the objective function, we adopt the Gauss--Hermite formula.   
Recall that the problem of CGMM is that an estimate of $K$ is not invertible, and we need the regularization to estimate $K^{-1/2}$. 
The regularization parameter 
is set to be $0.01$ in our simulation.
Although we repeated the estimation using different values of the regularization parameter, 
the result was insensitive to this choice in this finite sample simulation. 

The simulation is conducted using R software with package {\it libstableR}. 
The data generating process takes six different values of $\alpha \in \{0.5, 0.7, 1.0, 1.3, 1.6, 1.9 \}$ and two different values of $\beta \in \{0, 0.5\}$.
The values of $\sigma$ and $\mu$ are fixed to $1$ and $0$, respectively.
As the initial value of all estimators,
we use the quantile-based estimator of McCulloch \cite{mcculloch:1986}, which is consistent and easy to calculate but not necessarily efficient.

Table \ref{comparison} reports the result of 1000 repetitions with 1000 observations. 
The mean, standard deviation, skewness, and kurtosis are reported.
We see that all estimators are nearly unbiased.
Moreover, except for a few cases, we do not see a clear deviation from a normal distribution in term of the skewness and kurtosis.
However, the standard deviation is quite different among estimators.
The TML estimator dominates other two estimators for all cases even though its computational burden is quite small.
The CGMM estimator performs poorly especially when the true value of $\alpha$ is small.
In contrast, the EL estimator performs rather poorly when the true value of $\alpha$ is large.

\begin{tiny}
	\begin{table}
		\begin{center}
				\caption{Simulation result}
				\label{comparison}
				\centering
				\begin{tabular}{|lll|cccc| cccc| cccc|}
				\hline
					& & &\multicolumn{4}{c|}{QMLE} & \multicolumn{4}{c|}{ELE} & \multicolumn{4}{c|}{CGMM} \\
					 \cline{4-15} 
					$\alpha$ & $\beta$ & & $\hat{\alpha}$ & $\hat{\beta}$ & $\hat{\sigma}$ & $\hat{\mu}$ & $ \hat{\alpha}$ & $\hat{\beta}$ & $\hat{\sigma}$ & $\hat{\mu}$ & $\hat{\alpha}$ & $\hat{\beta}$ & $\hat{\sigma}$ & $\hat{\mu}$ \\
					\hline
					0.5 & 0 & Mean & 0.5001 & -0.0023 & 1.0000 & -0.0009
					  & 0.5000 & -0.0029 & 0.9980 & 0.0008 & 0.5006 & -0.0027
					   & 0.9996 &  -0.0013
					\\
					&	 &	Sd & 0.0191	& 0.0455 & 0.0773 & 0.0266 & 0.0245  & 0.0582 & 0.0827 & 0.0305 & 0.0584 & 0.1286 & 0.1092 &  0.0444
					\\
					& & Skew & 0.1126 & -0.0615 & 0.4498 & -0.1202 & 0.1309  & 0.0033 & 0.3717 & 0.0900 & 0.0106 & 0.0222 & 0.0715 & 0.0665
					\\
					&  & Kur & 2.9610  & 3.0564 & 3.5118 & 2.8604 & 3.0749 & 3.1392  & 3.2826 & 3.1535 & 3.1382 & 3.1803 & 2.9802 & 3.0663
					\\
					 & 0.5 & Mean & 0.4995 & 0.4997 & 1.0038  & 0.0009 & 0.4980  & 0.5019 &1.0042 & 0.0011 & 0.5002 & 0.4996  & 1.0010  &  0.0021
					\\
					&	 &	Sd & 0.0181	& 0.0387 & 0.0752 & 0.0366 & 0.0232 & 0.0543 & 0.0778 & 0.0400 & 0.0584 & 0.1282 & 0.1094 &  0.0546
					\\
					& & Skew & 0.0128 & -0.0277 & 0.1693 &  0.2090 & 0.1800 & 0.0950 & 0.2312 & 0.2377 & 0.1184 &  0.1164 & -0.0054 &  0.0166
					\\
					&  & Kur & 2.8393 & 2.8909 & 3.0163 & 3.2009  & 2.7689 & 2.9204 & 3.0465 & 3.2012 & 3.0269 & 2.9172 & 2.8122 & 3.0560
					\\
					\hline
					0.7 & 0 & Mean & 0.7000	& -0.0016  & 1.0032 & 0.0002 & 0.7008 & -0.0039 & 0.9958 & 0.0018 & 0.7001 & -0.0071 & 0.9928 &  0.0020
					 \\
					 &	 &	Sd & 0.0240 & 0.0436 & 0.0575 & 0.0337 & 0.0301 & 0.0552 & 0.0609 & 0.0365 & 0.0598 & 0.1051 &  0.0785 &  0.0490
					 \\
					 & & Skew & 0.0809 & 0.0572 & 0.0975 & -0.0026 & 0.2372 & 0.0630 & 0.2889 & -0.0138 & 0.1167 & 0.0224 & 0.1172 & -0.0165  
					 \\
					 &  & Kur &  2.8056 & 2.8497 & 3.0761 & 2.9722 &  3.0582& 3.0206 & 3.0453 & 3.0562 & 2.9967 & 2.7643 & 2.88841 & 2.9925
					 \\
					 & 0.5 & Mean & 0.7023 & 0.4997 & 0.9987  & 0.0012 & 0.7017 & 0.4988 & 0.9993 & 0.0004 & 0.7035 & 0.5041 & 0.9969 &  0.0011
					 \\
					 &	&	Sd &0.0243  & 0.0399 & 0.0549  & 0.0409 & 0.0303 & 0.0500 & 0.0580 & 0.0442 & 0.0607 & 0.1042& 0.0723 & 0.0553 
					 \\
					 & & Skew & 0.2565 & -0.0634 & 0.3667 &  0.3090 &  0.1111 & -0.1013 & 0.1919 & 0.0458 & -0.0167 & 0.1035 & 0.1691 &  0.2004  
					 \\
					  & & Kur & 2.9837 & 2.8447 & 3.0964 & 3.2816 & 2.7998 & 2.8292 & 2.8247 & 2.8587 & 2.8587 & 3.2082 & 3.2045 & 3.3522 
					  \\
					  \hline
					1.0 &	0 &	Mean &1.0018 & -0.0004 & 1.0038 & 0.0006 & 1.0025 & -0.0004 & 1.0036 & 0.0002 & 1.0005 & 0.0009 & 1.0030 &  0.0007
					 \\
					 &  & Sd & 0.0353  &0.0568 & 0.0453 & 0.0492 & 0.0396 &	0.0639 & 0.0466 & 0.0492 & 0.0611 & 0.0936 & 0.0524 & 0.0547 
					 \\
					 & & Skew & 0.1197 & -0.0608 & 0.1635 & 0.1755 & 0.0158 & -0.0283 & 0.1766 & 0.1077 & -0.0337 & -0.0345 & 0.1477 &  0.1223
					 \\
					 & & Kur & 3.0653 & 3.3013 & 2.8804 & 2.9999 &  2.8650 & 2.9374 & 2.8817 & 3.0077 &  2.6764 & 3.1695 & 2.8678 & 2.8860 
					 \\
					 & 0.5 & Mean & 1.0003 & 0.4995 & 1.0002 & 0.0002 & 1.0025 & 0.5003 & 0.9997 & 0.0002 & 1.0009 & 0.5024 & 0.9991 & -0.0006
					 \\
				  	 & & Sd & 0.0345 & 0.0478  & 0.0434	& 0.0477 & 0.0384 & 0.0583 & 0.0446 & 0.0493  & 0.0603  & 0.0930 & 0.0502  & 0.0562
				  	  \\
				  	  && Skew & 0.0915 & -0.1710 & 0.0779 & 0.0943 & 0.0296 & -0.0994 & 0.1300 & 0.1252 & 0.0966 & 0.0847 & 0.1006 & 0.1239  
				  	  \\
				  	  && Kur & 2.8549 & 2.8708 & 2.8244 & 3.0870  & 2.9937 & 3.0937 & 2.8579 & 3.1045 & 2.9499 & 3.1468 & 2.8947 & 2.9264  
				  	  \\
				  	  \hline
		    	1.3 & 0 & Mean & 1.3011 & -0.0027 &	1.0000 & 0.0001	& 1.3015 &  -0.0040 & 1.0002 & 0.0009 & 1.3006 & -0.0031 & 0.9986 &	-0.0003
			    \\
			    & & Sd & 0.0449 & 0.0744 & 0.0372 & 0.0523 & 0.0508 & 0.0821  & 0.0380  & 0.0533 & 0.0619 & 0.1011 & 0.0399 &  0.0560
     			\\
	    		&& Skew & -0.0304 & 0.0089 & 0.0887 & -0.0614 & -0.0008 &  0.1513 & 0.0864 & -0.0081 & -0.1612 & 0.0679 &  0.0605 &  0.0072   
		    	\\
		    	& & Kur &  2.9152 & 2.9449 & 2.9022 & 3.0298 & 2.8310 & 3.2701 & 2.8996 & 2.9636 & 3.1164 & 2.9232 & 3.0110 & 2.9021  		   
    			\\
	    		& 0.5 & Mean & 1.3014 & 0.5002 & 0.9996 & 0.0003 &  1.3025 & 0.4983 & 1.0003 & 0.0015 & 1.3034 & 0.5012 & 0.9995 & 0.0015 
		    	\\
			    & & Sd & 0.0439 & 0.0652 & 0.0366 & 0.0527 & 0.0504 & 0.0754 & 0.0377 & 0.0542 &  0.0593 & 0.1087 & 0.0403 & 0.0587 
	    		\\
		    	&& Skew & 0.0393 & -0.0369 & 0.1030 & 0.0184 & -0.0080 & -0.0293 & 0.1208 & 0.0148 & 0.0620 & -0.0295 & 0.1192 &  0.0673 
		    	\\
		    	&& Kur & 2.9720 & 2.7660 & 2.8661 & 3.1141 & 3.1213 & 2.9781 & 2.9181 & 3.0627 & 2.9224 & 2.8581 & 2.8740 & 3.0668 
		    	\\
		    	\hline
				1.6 & 0 & Mean & 1.6004 & 0.0004 & 0.9990 & -0.0016 & 1.6052 &  0.0008 & 1.0001 & -0.0019 & 1.5995 & -0.0039 & 0.9987 & -0.0002  
				\\
				& & Sd & 0.0491 & 0.1137 & 0.0318 & 0.0560 & 0.0605 & 0.1345 & 0.0321 & 0.0586 & 0.0552 & 0.1479 & 0.0332 & 0.0613 
				\\
 	   		    && Skew & -0.0663 & 0.0843 & 0.1365 & -0.0923 & 0.5137 & -0.0980 & 0.0637 & -0.0865 & -0.1163 & 0.0493 & 0.1434 & -0.1285  
			    \\
				&& Kur & 2.8879 & 3.2288 & 2.8530 & 3.0718 & 4.9116 & 2.9819& 3.2748 & 3.0262 & 2.9319 & 3.1401 & 2.8545 & 3.1150 
				\\
				& 0.5 & Mean & 1.6006 & 0.5033 & 0.9986 & -0.0009 & 1.6023 & 0.5111 & 0.9983 & 0.0000 & 1.6009 & 0.5033 & 0.9986 & -0.0003
				\\
				& & Sd & 0.0485 & 0.1008 & 0.0315 & 0.0553 & 0.0608 & 0.1303 & 0.0334 & 0.0610 & 0.0565 & 0.1483 & 0.0333 & 0.0607 
			    \\
				&& Skew & 0.0094 & 0.0804 & 0.0905 & -0.0294 & 0.0124 & 0.4189 & 0.2196 & 0.0206 & -0.0258 & 0.0723 & 0.1685 & -0.0662 
				 \\
				&& Kur & 2.9767 & 2.9599 & 2.9079 & 3.0665 & 2.8251 & 3.3305 & 2.9167 & 3.3463 & 3.1096 & 3.3187 & 2.8926 & 3.2160 
				\\
				\hline
				1.9 & 0 & Mean & 1.9025 & 0.0160 & 0.9998 & -0.0018 &  1.9022 & 0.0041 &  0.9986 & -0.0012 & 1.8957 & 0.0052 & 0.9985 & -0.0020
				 \\
				& &	Sd & 0.0390 & 0.3478 & 0.0267 & 0.0531  & 0.0452 & 0.4375 & 0.0286 & 0.0611 & 0.0425 & 0.5299 & 0.0297 & 0.0650   
				\\
				&& Skew & -0.0739 & 0.0623 & -0.0295 & -0.0352  & -0.5660 & -0.0011 & 0.1676 & -0.0335 & -0.2156 & -0.0312 &  0.1440 & -0.1065
				\\
				&& Kur & 2.9590 & 3.9513 & 2.9083 & 3.0549 &  2.9949 & 3.1632 & 2.9147 & 3.0477  & 2.8248 & 2.4583 & 2.8669 & 2.9937  
				\\
				& 0.5  & Mean & 1.9012 & 0.5317 & 0.9994 & -0.0009 	& 1.8997 & 0.5259 & 0.9980 & 0.0026 & 1.8917 & 0.4707 & 0.9981 & -0.0012
				\\
				& &	Sd & 0.0376 & 0.3027 & 0.0266 & 0.0532 & 0.0451 & 0.3659 & 0.0282 & 0.0596 &  0.0423 & 0.4651 & 0.0296 & 0.0642
					 \\
			    && Skew & -0.1684 & -0.3778 & 0.0187 & -0.0131& -0.6621 & -0.6294 & 0.1596 & 0.0285  & -0.2116 & -0.8586 & 0.1379 & -0.0667
				   \\
			    && Kur & 2.8764 & 3.4267 & 3.0886 & 3.1041   & 3.3209 & 3.6659 & 2.9539 & 2.9691 & 2.8167 & 3.4851 & 2.8465 & 2.9645 
				 \\
					\hline
			\end{tabular}
		\end{center}
	\end{table}
\end{tiny}

\begin{small}
\begin{table}
	\begin{center}
		\caption{Efficiency}
		\label{efficiency}
		\centering
		\begin{tabular}{|ll|cccc|}
			\hline
			$\alpha$ & $\beta$  & $\hat{\alpha}$ & $\hat{\beta}$ & $\hat{\delta}$ & $\hat{\mu}$ \\
			\hline
			0.5 & 0 & 0.800 & 0.772 & 0.978 & 0.913 \\
			      & 0.5&  0.821 &  0.805 & 0.969 &  0.974  \\
			0.7 & 0 & 0.956  & 0.952  &  1.018 & 1.006 \\
			& 0.5 & 0.924 &   0.883  & 1.001 & 1.006 \\
			1.0 & 0 & 0.988 & 0.965  & 1.009  &  0.955 \\
			& 0.5 & 0.997 &  0.992 & 1.005  & 1.006  \\
			1.3 & 0 & 0.999 & 1.003 & 1.026  & 1.009 \\
			& 0.5 & 0.999 &  1.006  & 1.006  & 1.015 \\
			1.6 & 0 & 0.995  &  0.988 & 1.017  & 0.996  \\
			& 0.5  & 0.986 & 0.997  & 1.003  &  1.008 \\
			1.9 & 0 & 0.913 & 0.824 & 0.987  & 1.002 \\
			& 0.5  & 0.922 &  0.886  & 0.983 & 0.999 \\
			\hline
		\end{tabular}
	\end{center}
\end{table}
\end{small}

Table \ref{efficiency} reports the ratio of the standard deviation of the ML estimator to that of the TML estimator under the same setting as that for 
Table \ref{comparison}. Thus, larger values indicate higher efficiency of TMLE.
Because MLE is computationally demanding, we use the theoretical standard deviation calculated by Nolan \cite{nolan:2020}.
The simulation generally supports our theory. 
The TML estimator is on par with the ML estimator for $\alpha$ between 1.0 and 1.6. 
However, for a smaller $\alpha$ we observe a slight degradation in the performance.

\begin{remark}
	$(\mathrm{i})$ The performance of the CGMM estimator may be improved if the 
	regularization parameter is determined 
	in a date-driven way $($cf. \cite{carrasco:kotchoni:2017}$)$.\\
	$(\mathrm{ii})$ Although the result is not reported here, we also investigated the performance of the TML estimator by using 11 grid points.
	The TML and the EL estimators have similar means and standard deviations in that case.
	This is not surprising, because two estimators are first-order equivalent if the same grid points are used.
	However, in a finite sample, the EL estimator performs poorly if a large number of grid points is used.\\
	$(\mathrm{iii})$ If the parameter regions are close to the boundaries, such as $\alpha=2$ or $\beta=\pm 1$, some elements of 
	the Fisher information matrix are known to diverge to $\infty$ $($see \cite{dumouchel:1973,nagaev:shkolnik:1988,matsui:2005}$)$. 
	It will be interesting to see what happens for these estimators in such situations, and we will pursue this in future work. \\
	$(\mathrm{vi})$ In finite samples, the optimal choice of the grid points in the EL and TML estimators will depend on the true value of $\alpha$. 
	If $\alpha$ is small, the estimators with more grid points around the origin often perform better. Further study of 
	this topic will also be part of future work.
\end{remark}

\section{Extensions to estimation of stable OU-processes} 
\label{application:ou}
As an extension, we estimate the parameters of $\alpha$-stable
Ornstein--Uhlenbeck processes 
$(X_t)_{t\ge 0}$ based on 
$n$ discrete observations $(X_h,X_{2h},\ldots,X_{nh})$ 
with an interval $h>0$. By letting $n\to \infty$ with/without $h\to 0$, we can consider both high and low frequently observed 
processes. 
Let
$(Z^\alpha_t)_{t\in \R}$ be a symmetric $\alpha$-stable L\'evy process
with $\E [e^{iuZ_1^\alpha}]=e^{-|\sigma u|^\alpha},\,u\in \R$, where $\sigma>0$
is the scale parameter. Then a stationary version
$(X_t)_{t\ge 0}$ has the form 
\begin{align}
\label{def:sou}
 X_t= \int_{-\infty}^t e^{-\lambda (t-v)} dZ^\alpha_v=  \int_s^t e^{-\lambda(t-v)} dZ^\alpha_v + e^{-\lambda(t-s)} X_s, \quad t\ge 0.  
\end{align}
Notice that \eqref{def:sou} satisfies the stochastic differential equation 
\[
 dX_t = -\lambda X_t dt + dZ^\alpha_t
\]
and that due to the independent increments of $(Z^\alpha_t)$, the process is
a Markov process (see, e.g. \cite[Sec. 3.6]{samorodnitsky:taqqu:1994} for
the definition and properties of $\alpha$-stable
Ornstein--Uhlenbeck processes). 
For this type of Markov processes, several estimation methods based on 
the ch.f. have been established. 
One finds a detailed review in \cite[Sec. 2.2.2]{yu:2004} (see also 
\cite{singleton:2001,chacko:viceira:2003,carrasco:chrnov:florens:ghysels:2007}). All of them are
variants of the GMM estimator based on the conditional ch.f. 
Our estimator also relies on the conditional ch.f., but it appears as the result of 
projection of the conditional score function on trigonometric functions.

We see our estimator more closely. 
Write the conditional likelihood equation
\[
 \frac{1}{n} \sum_{t=1}^n S(X_{h(t+1)};\theta \mid X_{ht})  = \frac{1}{n} \sum_{t=1}^n
 \frac{f_\theta(X_{h(t+1)};\theta\mid X_{ht})}{f(X_{h(t+1)};\theta\mid X_{ht})} =0, 
\] 
where $\theta=(\alpha,\sigma,\lambda)^{\top}$ is our targeting parameter vector. Here
$f(x_{h(t+1)};\theta\mid x_{ht})$ and $f_\theta(x_{h(t+1)};\theta\mid x_{ht})$ are
respectively the conditional density of $X_{h(t+1)}$ given $X_{ht}$ and its
derivative vector. We approximate the likelihood equation by projecting 
the conditional score $f_\theta/f$ onto the subspace spanned by $\{1,g\}$. 
The estimator is obtained as a zero of this approximated equation. 
We
call this estimation trigonometrically approximated conditional MLE (TCMLE for short).  

We present the exact form of TCMLE. 
In the definition \eqref{def:sou} since the last two terms are independent, the conditional ch.f. is written
as 
\begin{align*}
 \varphi_{t\mid s} (u;\theta) := \E [e^{i u X_t} \mid X_s] 
 = e^{i u e^{-\lambda (t-s)} X_s} e^{-|\sigma u |^\alpha
 (\lambda \alpha)^{-1}(1-e^{-\alpha\lambda (t-s)})}. 
\end{align*}
A conditional version of $\gamma(\theta)$ at $t$ given $s$,\,$t>s$ is
provided by 
\[
 \gamma_{t\mid s}(\theta ) = \big(
\varphi_{t\mid s}^R(u_1;\theta),\ldots,\varphi_{t \mid s}^R (u_k;\theta ), 
\varphi_{t\mid s}^I (u_1;\theta),\ldots,\varphi_{t\mid s}^I (u_k;\theta )
\big)^\top, 
\]
where $(u_1,\ldots,u_k)$ is a vector of evaluation points of the ch.f. 
Accordingly we have 
\[
 \gamma_{t\mid s,\theta}(\theta)= \frac{\partial \gamma_{t\mid s} (\theta)}{\partial \theta} = \big(
 \varphi^R_{t\mid s,\theta}(u_1;\theta),\ldots,
 \varphi^R_{t\mid s,\theta}(u_k;\theta), 
 \varphi^I_{t\mid s,\theta}(u_1;\theta),\ldots,
 \varphi^I_{t\mid s,\theta}(u_k;\theta)
\big), 
\]
where 
\begin{align*}
 \varphi_{t\mid s;\theta}^R(u;\theta) = \frac{\partial
 \varphi_{t\mid s}^R(u;\theta)}{\partial \theta},\qquad 
  \varphi_{t\mid s;\theta}^I(u;\theta) = \frac{\partial
 \varphi_{t\mid s}^I(u;\theta)}{\partial \theta}.  
\end{align*}
The quantity corresponding to $\Sigma(\theta)$ is
 defined by 
\[
 \Sigma_{t\mid s}(\theta) = \E_{t\mid s}[(g(X)-\gamma_{t\mid s}(\theta))
 (g(X)-\gamma_{t\mid s}(\theta))^\top], 
\]
where $\E_{t\mid s}[\cdot]$ denotes the expectation 
operator deduced from the conditional density $f(x_t;\theta \mid x_s)$. 
Now we obtain the trigonometrically approximated conditional score function as  
\[
\widetilde S(x_{ht};\theta \mid x_{hs}) 
:= \gamma_{ht\mid hs,\theta}(\theta)\Sigma_{ht \mid hs}(\theta)^{-1}\big(
g(x_{ht})-\gamma_{ht\mid hs}(\theta) 
\big),
\]
so that the corresponding empirical score is given by 
\[
 \wt S_{n}(\theta) = \frac{1}{n} \sum_{t=1}^n 
\wt S(X_{h(t+1)};\theta \mid X_{ht}).  
\]
The TCML estimator is the solution $\hat \theta_n $ of $\wt S_{n}(\theta)=0$. 

\begin{remark}
 Notice that similarly to the i.i.d. case,  
 GMM-type estimators again encounter the singularity problem 
 in the covariance matrix \eqref{cov} 
 as the number of the grid points increases, see \cite{carrasco:florens:2000} $($cf. \cite[Sec. 2.2.2]{yu:2004}$)$. 
 Thus one needs to increase the grid number depending on sample sizes or needs a regularization parameter for the optimal covariance 
 operator. 
 Although we do not have a rigorous theoretical result for the non-i.i.d. case, in view of numerical simulations, we do not observe 
 such a singularity in TCMLE.
\end{remark}
Hereafter, we will examine the finite sample performance of our estimator in high frequency settings. 
In the literature asymptotic theories for quasi-MLE have been established 
for OU processes driven by more general locally stable L\'evy processes, 
especially for $\lambda$ \cite{masuda:2019,clement:gloter:2019,clement:gloter:2020}. 
The main tool is locally asymptotic stable property of the processes. 
The joint asymptotic distribution of all parameters 
$\theta$ has been derived by \cite[Theorem 3.2]{clement:gloter:2020}.
Since TMLE converges to MLE as $k\to\infty,\,\tau\to 0$, for reasonably large $k$ and small $\tau$ TCMLE should approximate this asymptotic distribution also.  
We briefly explain this asymptotic in order to compare it with our simulation results.

Assume that we observe $(X_{t})_{t\ge 0}$ on $[0,T]$ with frequency $h=T/n$, that is, we observe 
a discretized process $(X_{t_i})_{0\le i \le n}$ with $t_i=i\cdot T/n$.  
Let $a_n={\rm diag} (n^{1/2}\log n,n^{1/2},n^{1/\alpha_0-1/2})$ and then as $n\to \infty$ 
\[
 a_n (\hat \theta_n-\theta_0) \stackrel{d}{\to} V_0^{-1/2} N, 
\]
where $N$ is the standard normal r.v. independent of $V_0$ and 
\beao
V_0=
\left(\barr{ccc} c_{11} & c_{12} & 0 \\
c_{21} & c_{22} & 0 \\
0 & 0 & c_{33} \int_0^T X^2_u du 
\earr\right).
\eeao
Here $c_{ij},\,i,j=1,2,3$ are some constants depending on $T$. In other words, $\hat \theta_n$ has asymptotic mixed normality. 
If numerical simulations replicate this asymptotic mixed normality, 
then the validity of TCMLE would be assured.

\begin{table}[htb] 
\small
\begin{center}
\caption{TCMLE for $\alpha$-stable OU processes with frequency $h=0.1,0.01$}
\begin{tabular}{|l|cccc|cccc|cccc|}\hline
    $h=10^{-1}$ & \multicolumn{4}{|c|}{$\alpha=1.3$} & \multicolumn{4}{|c|}{$\alpha=1.5$} & \multicolumn{4}{|c|}{$\alpha=1.8$}\\ \hline
    $\hat \theta$   & $\hat \alpha$ & $\hat \sigma$& $\hat \lambda$ & $\lambda^\ast$ 
    & $\hat \alpha$ & $\hat \sigma$ & $\hat \lambda$ & $\lambda^\ast$ 
    & $\hat \alpha$ & $\hat \sigma$ & $\hat \lambda$ & $\lambda^\ast$ \\ \hline
    Mean & $1.299$ & $1.004$ & $1.001$ & $-.0748$ & $1.500$ & $1.003$ & $1.004$ &  $-.102$  & $1.801$  & $1.001$ & $1.013$ & $-.110$  \\
    Sd   & $0.050$ & $0.064$ & $0.036$ & $0.914$ & $0.050$ & $0.052$ & $0.061$ &  $1.128$  & $0.044$  & $0.036$ & $0.112$ & $1.384$  \\
    Skew & $0.073$ & $0.221$ & $0.284$ & $0.169$ & $0.044$ & $0.147$ & $0.336$ &  $0.074$  & $-0.126$ & $0.220$ & $0.383$ & $0.000$  \\
    Kur  & $2.626$ & $3.061$ & $5.407$ & $3.296$ & $2.682$ & $3.083$ & $3.875$ &  $3.429$  & $2.575$  & $3.086$ & $3.259$ & $2.971$  \\ \hline 
    Mean & $1.302$ & $1.001$ & $0.999$ & $0.008$ & $1.503$ & $1.000$ & $1.001$ & $-.043$ & $1.803$  & $1.000$ & $1.007$ & $-.078$  \\
    Sd   & $0.042$ & $0.053$ & $0.025$ & $0.922$ & $0.042$ & $0.042$ & $0.047$ & $1.139$ & $0.037$  & $0.028$ & $0.090$ & $1.405$  \\
    Skew & $0.135$ & $0.117$ & $0.183$ & $-.008$ & $0.003$ & $0.060$ & $0.336$ & $0.077$ & $-.125$  & $0.211$ & $0.338$ & $0.025$  \\
    Kur  & $2.753$ & $2.764$ & $4.504$ & $3.230$ & $2.764$ & $2.764$ & $4.592$ & $3.136$ & $2.830$  & $3.023$ & $3.606$ & $3.232$  \\ \hline 
    $h=10^{-2}$ & \multicolumn{4}{|c|}{$\alpha=1.3$} & \multicolumn{4}{|c|}{$\alpha=1.5$} & \multicolumn{4}{|c|}{$\alpha=1.8$}\\ \hline
    $\hat \theta$   & $\hat \alpha$ & $\hat \sigma$& $\hat \lambda$ & $\lambda^\ast$ 
    & $\hat \alpha$ & $\hat \sigma$ & $\hat \lambda$ & $\lambda^\ast$ 
    & $\hat \alpha$ & $\hat \sigma$ & $\hat \lambda$ & $\lambda^\ast$ \\ \hline
    Mean & $1.307$ & $1.020$ & $1.033$ & $-.206$ & $1.505$ & $1.012$ & $1.061$ &  $-.022$  & $1.800$  & $1.005$ & $1.123$ & $-.284$  \\
    Sd   & $0.129$ & $0.194$ & $0.182$ & $0.789$ & $0.110$ & $0.143$ & $0.260$ &  $0.094$  & $0.070$  & $0.078$ & $0.389$ & $1.275$  \\
    Skew & $0.089$ & $0.737$ & $1.703$ & $-.382$ & $-.064$ & $0.646$ & $1.203$ &  $-.021$  & $-.360$  & $0.602$ & $0.901$ & $-.357$  \\
    Kur  & $2.703$ & $3.832$ & $10.67$ & $4.415$ & $2.792$ & $3.544$ & $5.925$ &  $2.974$  & $2.795$  & $3.298$ & $4.218$ & $3.427$  \\ \hline 
    Mean & $1.309$ & $1.006$ & $1.010$ & $-.089$  & $1.506$ & $1.004$ & $1.025$ & $-.133$ & $1.802$  & $1.002$ & $1.074$ & $-.214$  \\
    Sd   & $0.107$ & $0.156$ & $0.109$ & $0.0683$ & $0.093$ & $0.118$ & $0.170$ & $0.851$ & $0.057$  & $0.063$ & $0.295$ & $1.195$  \\
    Skew & $0.166$ & $0.428$ & $0.453$ & $-.101$  & $-.035$ & $0.469$ & $0.814$ & $-.017$ & $-.376$  & $0.537$ & $1.057$ & $-.072$  \\
    Kur  & $2.844$ & $2.910$ & $4.889$ & $3.355$  & $2.689$ & $3.133$ & $5.172$ & $3.219$ & $2.871$  & $3.442$ & $5.089$ & $3.150$  \\ \hline 
  \end{tabular}\label{process:simula}
\vspace{2mm}\\
\begin{minipage}{15cm}
Monte Carlo simulations of TCMLE for parameters $(\alpha,\sigma,\lambda)$ of $\alpha$-stable OU processes with 
frequency $h=0.1$ (top two rows) and $h=0.01$ (bottom two rows). For the first and third rows we take $n=1000$ and 
for the second and fourth rows $n=1500$.
The estimation procedures are repeated $500$ times. Here $\lambda^\ast$ is a modified version of $\hat \lambda$, see
 \eqref{def:modlambda} for definition. In each cell $4$ statistical characteristics of estimators are presented.
\end{minipage}
\end{center}
\end{table}
In our simulation, we set $h=0.1,0.01$ and $n=1000,1500$ so that  
a path of process $(X_t)_{t\ge 0}$ on $t\in [0,nh]$ is observed in $n$ different times.  
We repeated the estimation procedure 
$500$ times in three different cases $\alpha=1.3, 1.5$ and $1.8$ with $\sigma=\lambda=1.0$. 
We uses $101$ equidistant points $u=(0.05, \dots, 5.05)$ for evaluation of 
empirical conditional ch.f. 
\begin{figure}[htbp]
\begin{center}
\includegraphics[width=\textwidth]{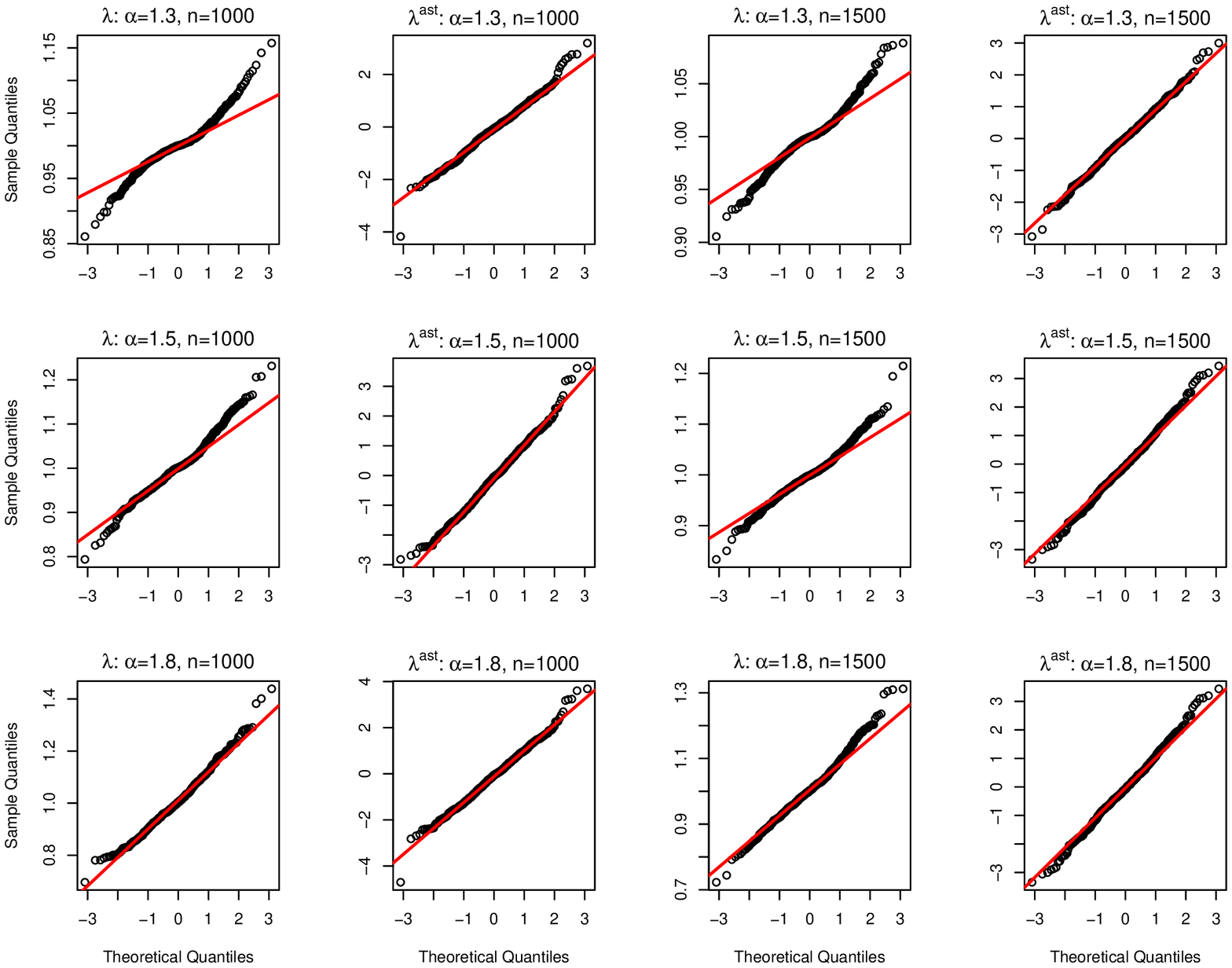}
\end{center}
\caption{
QQ plots for estimators $\hat\lambda_n $ 
and its modification $\lambda^\ast_n$ 
with sample sizes $n=1000$ (left two columns) and $n=1500$ (right tow columns), 
where outliers for $\lambda^\ast_n$ are removed. From top to bottom we change parameter 
$\alpha=1.3,1.5,1.8$ successively.}\label{fig:1}
\end{figure}
The results are summarized in Table \ref{process:simula}, where   
$4$ statistics are presented similarly as before. 
Judging from the mean values, all estimators are unbiased, 
while obviously the standard deviations of $n=1500$ are smaller than $n=1000$.
The standard deviations for $h=0.1$ are smaller than those for $h=0.01$. 
In view of skewness and kurtosis, asymptotic normality follows for $\hat \alpha_n$ and $\hat \sigma_n$ 
(We check QQ-plots also, though we do not present here), though these values for $\hat \lambda_n$ 
show quite different values. Values of $\hat \lambda_n$ are studied in detail bellow. 
We find difficulty in confirming orders $a_n$ directly, since accuracy of the asymptotic scale 
matrix $V_0$ depends on both $h$ and $T=nh$. Alternatively by observing the 
asymptotic mixed normality for $\hat \lambda_n$ we see that TCMLE properly functions.

Thus, we consider skewness, kurtosis, and QQ-plots for a version 
\begin{align}
\label{def:modlambda}
  \lambda^\ast_n:=\sqrt{W} (\hat \lambda_n-\bar \lambda),\quad W= \int_0^{nh} X^2_v dv,
\end{align}
where $\bar \lambda$ is the mean of $500$ samples of $\hat \lambda_n$. 
The values of $\lambda_n^\ast$ are presented in Table \ref{process:simula}. 
Since the original $\lambda^\ast_n$ sometimes takes huge outliers (mostly negative), 
we present further modified values of $\lambda^\ast_n$, that is, we remove several very small values. 
In cases $\alpha=1.3,n=1000,1500,h=0.01$ and $\alpha=1.5,n=1500,h=0.01$ we remove $1$ outlier, 
and in cases $\alpha=1.3,1.5,h=0.01,n=1000$, we remove $5$ outliers. 
Notice that skewness and kurtosis are sensitive to outliers, and the two characteristics of $\lambda^\ast_n$ often show worse values than those of 
$\hat \lambda_n$; for instance, in $\alpha=1.3,1.5,h=0.01,n=1000$, and then $($skewness, kurtosis$)$ are 
$(-1,43,11.44)$ and $(-3.50,31.37)$ respectively.

In Figure \ref{fig:1}, we present QQ-plots of $\hat \lambda_n$ and $\lambda^\ast_n$ for 
$\alpha=1.3,1.5,1.8$ and sample sizes $n=1000,1500$. Similarly to Table \ref{process:simula}, outliers of $\lambda^\ast_n$ are removed. 
In view of these graphs, $\hat \lambda_n$ is well-rescaled, and 
plots for $\lambda^\ast_n$ almost show normality. Judging from these facts we conclude that 
the theory of asymptotic mixed normality for $\hat \lambda_n$ is depicted correctly with TCMLE.


\appendix 
\section{Supplementary results and proofs for Section \ref{subsec:asymptotic}}
\label{append:suppl}

\subsection{Proof of Lemma \ref{lem:auxiliary1}}
\label{sebsec:a1}
$(\mathrm{i})$ 
It suffices to take $k=2$ with non-zero $|u_1|\neq |u_2|$ in \eqref{def:deriv:gamma} and prove that
rows of $\gamma_\theta(\theta)$
are linearly independent. 
We take a similar approach as in the proof of \cite[Theorem 3.1]{matsui:2019}, 
where the non-singularity of the Fisher information matrix is shown for the stable law.
Let $\bfa =(a_1,\ldots,a_4)^\top\in \R^4$ and we
show that 
\begin{align}
\label{pf:indirecthipo}
 \gamma_\theta(\theta)^\top \bfa =\bf0
\end{align}
implies $\bfa=\bf0$ by indirect proof. Assume \eqref{pf:indirecthipo} for
$\bfa\neq \bf0$, then 
\[
 \big(
 \varphi^R_\theta (u_1;\theta)+ i \varphi^I_\theta (u_1;\theta)
 , 
 \varphi^R_\theta (u_2;\theta) + i \varphi^I_\theta (u_2;\theta)
\big)^\top \bfa = 
 \big(
 \varphi_\theta (u_1;\theta), \varphi_\theta (u_2;\theta) 
\big)^\top \bfa = \bf0 
\]
holds. 
Therefore, the complex vectors 
\[
  \left(
    \begin{array}{c}
      \varphi_\mu(u_1;\theta) \\
      \varphi_\mu(u_2;\theta)
    \end{array}
  \right),   \left(
    \begin{array}{c}
      \varphi_\sigma(u_1;\theta) \\
      \varphi_\sigma(u_2;\theta)
    \end{array}
  \right),  \left(
    \begin{array}{c}
      \varphi_\alpha(u_1;\theta) \\
      \varphi_\alpha(u_2;\theta)
    \end{array}
  \right),
  \left(
    \begin{array}{c}
      \varphi_\beta(u_1;\theta) \\
      \varphi_\beta(u_2;\theta)
    \end{array}
  \right)
\]
are linearly  
dependent with real coefficients $\bfa\neq \bf0$.
For convenience we omit the parameter $\theta$ in $\varphi(u;\theta)$ and $\varphi_{\theta_i}(u;\theta)$ and functions of them.

For $\psi(u)=\log \varphi(u)$, we define $\psi_{\theta_i}(u)=\partial
\psi(u)/\partial \theta_i,\,i=1,\ldots,4$, so that we have
$\varphi_{\theta_i}(u)=\psi_{\theta_i}(u)\varphi(u)$, where 
\begin{align*}
\psi_\mu(u) &=
iu, \\
 \psi_\sigma(u) &= 
-\alpha
 |u|^\alpha +iu (\alpha|u|^{\alpha-1}-1)\beta \tan (\pi\alpha/2), \\
 \psi_\alpha(u) &= 
 -|u|^\alpha \log |u| +i u |u|^{\alpha-1} \log |u| \beta
 \tan (\pi\alpha/2) 
 + i u (|u|^{\alpha-1}-1)(\pi\beta/2) \cos^{-2}(\pi\alpha/2),\\
 \psi_\beta(u) &= 
 i u (|u|^{\alpha-1}-1)\tan (\pi\alpha/2). 
\end{align*}
Now we look $\sum_{i=1}^4 a_i\varphi_{\theta_i}(u_k)= \sum_{i=1}^4
a_i\psi_{\theta_i}(u_k) \varphi(u_k)$ 
for $k=1,2$. If these sums are zero,
then 
\[
 \sum_{i=1}^4 a_i \psi_{\theta_i}(u_k)=0. 
\]
In view of the real parts of $\psi_\sigma$ and $\psi_\alpha$, 
\[
 a_2\alpha+ a_3 \log |u_k|=0, \quad k=1,2,
\]
should hold for $|u_1|\neq |u_2|$. This is impossible unless
$a_2=a_3=0$. Moreover, from $\psi_\mu$ and $\psi_\beta$, we have 
\[
 a_1-a_4 \tan (\pi\alpha/2) + a_4
 \tan(\pi\alpha/2)|u_k|^{\alpha-1}=0,\quad k=1,2,
\]
which is impossible unless $a_1=a_4=0$. 
Notice that at $\alpha=1$, 
the above equation reduces to $a_1=a_4(2/\pi) \log |u_k|$, and thereby the result is kept. 
Thus, we obtain $\bfa=\bf0$, which is a
contradiction.  \\
\noindent
$(\mathrm{ii})$ Let $\bfa=(a_1,\ldots,a_{2k})^\top\neq \bf0$. Since
trigonometric functions in $g(x)$ constitute a basis on $L^2(-\pi,\pi)$
(see e.g. \cite[Theorem 1]{Christensen:2006}), they are linearly
independent. 
Thus, $\bfa^\top g(x)=0$ for any $x\in (-\pi,\pi)$ if and
only if $\bfa=\bf0$, cf. \cite[p.9]{kolmogorov:formin:2012}. This implies
that $\bfa^\top (g(x)-\gamma(\theta))$ is non-deterministic as a function
of $x$ for $\bfa\neq \bf0$. Now for any $\bfa\neq \bf0$ it follows from Fubini  
that 
\[
 \bfa^\top \Sigma(\theta) \bfa = \int_{-\infty}^\infty \{ \bfa^\top ( g(x)-\gamma(\theta))\}^2
 f(x;\theta) dx >0,  
\] 
since the support of $f(x;\theta)$ is the whole real line and $\bfa^\top
g(x)$ is
continuous in $x$. Thus, $\Sigma(\theta)$ is positive definite.\hfill $\square$ 
\begin{remark}
\label{rem:auxiliary1}
 $(\mathrm{i})$ We could not remove the condition $|u_i|\neq |u_j|$. Indeed, we take 
 $(u_1,u_2,\ldots,u_{k}),\,k\ge 3$ such that $u_1=-u_2$ and $u_i=0,\,i\ge 3$, and $\bfa=(1,-1,0,\ldots,0)$, and then 
$\bfa^\top g(x)=0$ holds for all $x\in \R$. \\
 $(\mathrm{ii})$ In view of \cite[Theorem 1]{Christensen:2006}, we do not necessarily use the same set of points $(u_1,u_2,\ldots,u_{k})$ 
for both $\varphi^R$ and $\varphi^I$. We might obtain a better estimator with two different sets of evaluation points.  
\end{remark}

\subsection{Proof of Theorem \ref{thm:asymptocs:fixedk}}
\label{sec:append:pf:thm:asymptocs:fixedk}
We will check the conditions of \cite[Theorem 5.41]{vandervaat:2000} where $\psi_\theta(x)$ 
there corresponds to $\wt S(x;\theta)$ here. 
Since the conditions of \cite[Theorems 5.41 and 5.42]{vandervaat:2000} are the same, the existence of a consistent root  
follows from Theorem 5.42, while asymptotic normality and \eqref{asympt:linear:expression} directly follow from Theorem 5.41. 

In view of \eqref{def:TMLE}, $\wt S(x;\theta)$ is twice continuously differentiable for every $x$ 
because the ch.f. $\varphi(x;\theta)$ is three times continuously differentiable w.r.t. $\theta$ on $C\subset\mathring{\Theta}$. 
Note that in \cite{matsui:2019}, the derivatives of $\varphi(u;\theta)$ w.r.t. $\theta$ are obtained up to the second order.
In a similar manner we could obtain the third-order derivatives even in the complicated case of $\alpha=1$. 

From \eqref{def:TMLE} and \eqref{hessian:tmle}, $\E_{\theta_0}[\wt S(X;\theta_0)]=0$ and $\E_{\theta_0}[|\wt S(X;\theta_0)|^2]<\infty$ follow. 
Write the first-order derivative matrix ($4\times 4$ denominator layout) as 
\[
 \frac{\partial \wt S(x;\theta)}{\partial \theta} = \Big(
\frac{\partial \wt S}{\partial \mu}^\top,\,\frac{\partial \wt S}{\partial
 \sigma}^\top, \,\frac{\partial \wt S}{\partial \alpha}^\top, \,\frac{\partial \wt S}{\partial \beta}^\top
\Big)^\top,
\]
where each row $\partial \wt S/\partial \theta_j,\,j=1,\ldots,4$ is
 a $ 1 \times 4$ vector and the $i,j$ element of $\partial \wt S/\partial
 \theta $ is 
 \begin{align}
\label{def:firstderiv}
  \Big[ \frac{\partial \wt S(x;\theta)}{\partial \theta } \Big]_{ij}
  & = \Big[
 \frac{\partial^2 \gamma(\theta) }{\partial \theta_j \partial \theta_i} +
  \frac{\partial \gamma(\theta) }{\partial \theta_i} \Sigma(\theta)^{-1}
  \frac{\partial \Sigma(\theta)}{\partial \theta_j}
 \Big] \Sigma(\theta)^{-1} (g(x)-\gamma(\theta)) \\
 &\qquad 
+ \frac{\partial \gamma(\theta) }{\partial \theta_j}
  \Sigma(\theta)^{-1}  \frac{\partial \gamma(\theta)}{\partial \theta_i}^\top. \nonumber 
 \end{align}
Thus $\E[\partial \wt S(X;\theta_0)/\partial \theta]=\gamma_\theta(\theta_0)\Sigma(\theta_0)^{-1}\gamma_\theta(\theta_0)^\top$ exists and is non-singular 
according to Lemma \ref{lem:auxiliary1} (cf. Remark \ref{rem:sec:triscore}).  

From the form of the first-order partial derivative \eqref{def:firstderiv}, it is not difficult to observe 
\begin{align}
\label{form:third:deriv}
 \frac{\partial^2 \wt S(x;\theta)}{\partial \theta \partial \theta_k}= A(\theta)(g(x)-\gamma(\theta)) + B(\theta),\quad k=1,2,3,4,
\end{align}
where $A(\theta)$ and $B(\theta)$ are continuous matrix functions in $\theta \in C$. 
Since $C$ is compact and $g$ is integrable, we could find a dominant integrable function $\ddot{\psi}(x)$ for each element of 
\eqref{form:third:deriv}.
Now due to \cite[Theorem 5.42]{vandervaat:2000}, there exists a consistent sequence of roots $\hat \theta_n$, and by 
\cite[Theorem 5.41]{vandervaat:2000} this sequence satisfies asymptotic normality together with \eqref{asympt:linear:expression}. \hfill $\square$ 

\section{Supplementary results and proofs for Section \ref{subsec:asymptQMLinfty}}
\label{sec:auxproofs}
For the proofs of Theorem \ref{thm:consistency} (Subsection \ref{append:thm:consitency}) and Theorem \ref{thm:asympeffi} 
(Subsection \ref{subsec:lemma:asymptoticnormal}), we need supplementary results. 
We use two theorems from \cite{brant:1984} (Theorems \ref{thm65:brant:1984} and \ref{thm66:brant:1984}), and the following lemmas and a proposition. 
Throughout this section, let $c,\,c',\,c''$, etc. be 
generic positive constants 
whose values are not of interest. 
For a given function $g(x)$, we denote by $g_\tau(x)$ the $2\pi/\tau$ periodic function:
\[
g_\tau(x) = \sum_{j=-\infty}^\infty g\, \big( x + 2\pi j/\tau \big),\quad \tau>0. 
\]
The first lemma is an easy application of Lemma 6.2 in \cite{brant:1984}. 
\begin{lemma}
\label{lem:tauapprox:derivatives}
 Let $f_\tau,\,f_\tau',\,f_{\tau,\theta_i}$ and $f_{\tau,\theta_i}',\,i=1,\ldots,4$ be periodic functions respectively based on $f,\,f',\,f_{\theta_i}$ and $f_{\theta_i}'$. Then, for any $\theta \in C$, we have 
\begin{align*}
 \sup_{x\in I_\tau} |f_{\tau}(x;\theta)-f(x;\theta)| &\le c_1 
 \tau^{1+\alpha},\qquad \sup_{x\in I_\tau} |f_{\tau}'(x;\theta)-f'(x;\theta)| \le c_2 
 \tau^{2+\alpha},\\
  \sup_{x\in I_\tau} |f_{\tau,\theta_i}(x;\theta)-f_{\theta_i}(x;\theta)| &\le c_3 
 \tau^{1+\alpha}\log 1/\tau, \qquad
  \sup_{x\in I_\tau} |f_{\tau,\theta_i}'(x;\theta)-f_{\theta_i}'(x;\theta)| \le c_4 
 \tau^{2+\alpha}\log 1/\tau,
\end{align*}
where $c_i,\,i=1,\ldots,4$ are positive constants that depend on neither $\theta$ nor $x$. 
\end{lemma}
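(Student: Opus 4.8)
The plan is to exploit the fact that the wrapping error is exactly the tail sum over the omitted copies, and then to control each term by the known polynomial tail behaviour of stable densities. For a generic function $g$ and $x \in I_\tau$, one has
\[
 g_\tau(x) - g(x) = \sum_{j \neq 0} g\Big( x + \frac{2\pi j}{\tau} \Big),
\]
and since $|x| \le \pi/\tau$ on $I_\tau$, the shifted arguments obey $|x + 2\pi j/\tau| \ge \pi|j|/\tau$ for every $j \neq 0$. Thus the size of the error is governed entirely by how fast $g$ decays at infinity, which is precisely the content of Lemma 6.2 in \cite{brant:1984}.

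First I would record the uniform tail rates of the four functions in question. From the standard asymptotic series expansion of the stable density (cf. \cite{zolotarev}), the leading term yields $f(x;\theta) = O(|x|^{-(1+\alpha)})$ as $|x| \to \infty$, and differentiation in $x$ lowers the exponent by one, giving $f'(x;\theta) = O(|x|^{-(2+\alpha)})$. Differentiation in $\theta_i$ --- most notably in $\alpha$, through $\partial_\alpha |x|^{-(n\alpha+1)} = -n(\log|x|)\,|x|^{-(n\alpha+1)}$ --- introduces a single logarithmic factor, so that $f_{\theta_i}(x;\theta) = O(|x|^{-(1+\alpha)}\log|x|)$ and $f_{\theta_i}'(x;\theta) = O(|x|^{-(2+\alpha)}\log|x|)$. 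Because $C$ is a compact subset of $\mathring\Theta$, hence bounded away from the boundary values $\alpha = 2$ and $|\beta| = 1$, these estimates hold uniformly in $\theta \in C$ and in $x$.

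With the tail rates in hand, the bounds follow by summation. For the density itself,
\[
 \sum_{j \neq 0} \big| f\big(x + 2\pi j/\tau;\theta\big) \big| \le c \sum_{j \neq 0} \Big( \frac{\pi|j|}{\tau} \Big)^{-(1+\alpha)} = c\,\pi^{-(1+\alpha)}\, \tau^{1+\alpha} \sum_{j \neq 0} |j|^{-(1+\alpha)},
\]
and the last series converges for $\alpha > 0$, giving the first bound. For the parameter derivatives one uses in addition $\log|x + 2\pi j/\tau| \le \log(1/\tau) + \log|j| + c'$; since $\sum_{j\neq 0} |j|^{-(1+\alpha)}\log|j| < \infty$, the dominant contribution as $\tau \to 0$ is $\tau^{1+\alpha}\log(1/\tau)$, which is the third bound. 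Replacing the exponent $1+\alpha$ by $2+\alpha$ throughout handles $f'$ and $f_{\theta_i}'$ and produces the remaining two estimates.

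The main obstacle is the first step rather than the summation: one must secure the uniform tail expansions with the correct exponents and verify that the asymptotic series may be differentiated term-by-term in both $x$ and $\theta$, with constants controllable uniformly over the compact set $C$. This includes handling the continuous parametrization at $\alpha = 1$, where the density expansion takes a slightly different form. Once the tail rates are established uniformly, everything reduces to the elementary convergent-series estimate above, which is why the lemma is an easy consequence of \cite[Lemma 6.2]{brant:1984}.
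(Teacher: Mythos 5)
Your proof is correct and follows essentially the same route as the paper: both reduce the wrapping error to the tail sum $\sum_{j\neq 0}$ over shifted copies, invoke exactly the tail rates $|f|=O(|x|^{-(1+\alpha)})$, $|f'|=O(|x|^{-(2+\alpha)})$, $|f_{\theta_i}|=O(|x|^{-(1+\alpha)}\log|x|)$, $|f_{\theta_i}'|=O(|x|^{-(2+\alpha)}\log|x|)$, and then run the convergent-series estimate from \cite[Lemma 6.2]{brant:1984}. The only difference is that the paper obtains these tail bounds (uniformly on $C$, including the $\alpha=1$ parametrization and the term-by-term differentiability issue you flag as the main obstacle) as a ready-made citation to Lemma 2.1 of \cite{matsui:2019}, rather than re-deriving them from the Zolotarev series expansion.
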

\begin{proof}
Since the proof is a slight modification of that for \cite{brant:1984},
 we only state the difference in the condition. 
For sufficiently large $|x|$, we clearly have
$f=O(|x|^{-(1+\alpha)})$. 
In view of Lemma 2.1 in \cite{matsui:2019}, $f,\,f',\,f_{\theta_i}$ and
 $f_{\theta_i}'$ are continuous in $x\in\R$ and satisfy 
\begin{align*}
  |f'| = O(|x|^{-(2+\alpha)}) ,\quad
 |f_{\theta_i}|=O(|x|^{-(1+\alpha)}\log |x|),\quad
 |f_{\theta_i}'|=O(|x|^{-(2+\alpha)}\log |x|), 
\end{align*}
for sufficiently large $|x|$, where the orders of left are uniform in $i$. 
We replace the tail condition (6.1) of \cite[Lemma 6.2]{brant:1984} 
with the above bounds, and then the results follow from the reproduction of the proof.  
\end{proof}
As stated before Theorem \ref{thm:asympeffi}, we evaluate the distance between $\wt S$ and $S$ 
through the two-step approximation; 
we will evaluate the distance between $S$ and $S_\tau$ in Lemma \ref{lem:uniconti:s} and 
that between $S_\tau$ and $\wt S$ 
in Proposition \ref{prop:upperbounds}. 
In this regard we define several notations.
Denote the $i$-th element of $S(x;\theta)$ by $S_i(x;\theta)$ for $i=1,\ldots,4$, namely, 
\[
 S_i(x;\theta)=\frac{f_{\theta_i}(x;\theta)}{f(x;\theta)}.
\]
Similarly, denote the $i$-th component of 
$S_\tau (x;\theta)$ and $\wt S(x;\theta)$ respectively by 
$S_{\tau,i}(x;\theta)$ and $\wt S_i (x;\theta)$
for $i=1,\ldots,4$. 
Notice that in the proofs, we often omit $i$ when the corresponding proof holds uniformly in $i$.
In such a case we make a caution in the beginning. 
\begin{lemma}
\label{lem:uniconti:s}
 For every $x$, $S_{\tau,i} (x;\theta)$ converges to $S_i(x;\theta),\,i=1,\ldots,4$ 
uniformly in $\theta \in C$ as $\tau \to 0$. 
\end{lemma}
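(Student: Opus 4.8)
The plan is to estimate the difference of the two ratios directly from the numerator and denominator bounds already supplied by Lemma \ref{lem:tauapprox:derivatives}. Writing $S_{\tau,i}=f_{\tau,\theta_i}/f_\tau$ and $S_i=f_{\theta_i}/f$, I would begin from the elementary identity
\[
 \frac{f_{\tau,\theta_i}}{f_\tau}-\frac{f_{\theta_i}}{f}
 =\frac{f_{\tau,\theta_i}-f_{\theta_i}}{f_\tau}
  +S_i(x;\theta)\,\frac{f-f_\tau}{f_\tau},
\]
which holds at every $x$ because both $f(x;\theta)$ and $f_\tau(x;\theta)$ are strictly positive. Taking absolute values reduces the claim to bounding the two terms on the right.

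The key structural remark is that the periodization is a sum of nonnegative terms one of which (the $j=0$ summand) equals $f(x;\theta)$ itself, so
\[
 f_\tau(x;\theta)=\sum_{j=-\infty}^\infty f\Big(x+\tfrac{2\pi j}{\tau};\theta\Big)\ge f(x;\theta)>0,
\]
the strict positivity being a consequence of the stable density having support equal to all of $\R$. Hence $1/f_\tau\le 1/f$, and the identity above yields
\[
 |S_{\tau,i}(x;\theta)-S_i(x;\theta)|
 \le\frac{|f_{\tau,\theta_i}-f_{\theta_i}|}{f(x;\theta)}
  +|S_i(x;\theta)|\,\frac{|f-f_\tau|}{f(x;\theta)}.
\]
For a fixed $x$ and all $\tau$ small enough that $x\in I_\tau$, I would then insert the two bounds of Lemma \ref{lem:tauapprox:derivatives}, namely $|f_{\tau,\theta_i}-f_{\theta_i}|\le c_3\,\tau^{1+\alpha}\log(1/\tau)$ and $|f-f_\tau|\le c_1\,\tau^{1+\alpha}$, whose constants do not depend on $\theta$ or $x$.

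The uniformity in $\theta$ follows from compactness of $C$. Since $f(\cdot\,;\cdot)$ and $f_{\theta_i}(\cdot\,;\cdot)$ are continuous and $f$ is strictly positive, both $m(x):=\inf_{\theta\in C}f(x;\theta)$ and $M_i(x):=\sup_{\theta\in C}|S_i(x;\theta)|$ are finite with $m(x)>0$; moreover, as $C\subset\mathring\Theta$ is compact the exponent obeys $\alpha\ge\alpha_-$ for some $\alpha_->0$ on $C$, so $\tau^{1+\alpha}\le\tau^{1+\alpha_-}$ whenever $\tau<1$. Combining these observations gives, for small $\tau$,
\[
 \sup_{\theta\in C}|S_{\tau,i}(x;\theta)-S_i(x;\theta)|
 \le\frac{c_3\,\tau^{1+\alpha_-}\log(1/\tau)+c_1 M_i(x)\,\tau^{1+\alpha_-}}{m(x)},
\]
whose right-hand side tends to $0$ as $\tau\to0$, which is precisely the asserted uniform convergence. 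The delicate points, rather than genuine obstacles, are the denominator control and the passage to a supremum over $\theta$: both rest on the strict positivity of the stable density on the whole line (without which $f_\tau$ need not be bounded away from zero) together with the continuity of $f$ and $f_{\theta_i}$ in $\theta$ on the compact set $C$. Once these are in place the decay factor $\tau^{1+\alpha}$ dominates after division by the fixed positive constant $m(x)$, and the logarithmic factor is harmless.
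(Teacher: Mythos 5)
Your proposal is correct and follows essentially the same route as the paper's proof: the same decomposition $S_{\tau,i}-S_i=\frac{f_{\tau,\theta_i}-f_{\theta_i}}{f_\tau}+S_i\,\frac{f-f_\tau}{f_\tau}$, the same replacement of $f_\tau$ by $f$ in the denominators via $f_\tau\ge f>0$, and the same invocation of Lemma \ref{lem:tauapprox:derivatives}. Your treatment of the uniformity in $\theta$ (via $\inf_{\theta\in C}f(x;\theta)>0$, $\sup_{\theta\in C}|S_i(x;\theta)|<\infty$, and $\alpha\ge\alpha_-$ on the compact set $C$) is simply a more explicit rendering of what the paper leaves implicit.
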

\begin{proof}
Since the proof is valid for all $i$, 
we omit subscript $i$ of $\theta_i$ and write $S(x;\theta),\,S_\tau(x;\theta),\,f_\theta,\,f_{\tau,\theta}$ 
to indicate elements of them for convenience. 
 Since 
\begin{align}
\label{eq:stau1}
 S_{\tau}(x;\theta)-S(x;\theta) =
 \frac{f_{\tau,\theta}(x;\theta)-f_\theta(x;\theta)}{f_\tau(x;\theta)} +
 S (x;\theta)\frac{f(x;\theta)-f_\tau(x;\theta)}{f_\tau(x;\theta)}, 
\end{align}
we evaluate 
\begin{align*}
 |S_{\tau} (x;\theta)-S(x;\theta)| \le \Big|
\frac{f_{\tau,\theta}-f_\theta}{f}
\Big|+ |S|\Big|
\frac{f-f_\tau}{f}
\Big|. 
\end{align*}
Since $f$ is unimodal and the support is 
$\R$, $f>0$ holds on $\R$. Indeed $|f(x;\theta)|\sim c|x|^{-(1+\alpha)}$ 
for sufficiently large $|x|$. 
Due to Proposition 2.3 of \cite{matsui:2019}, 
$S(x;\theta)=O(\log |x|)$. 
Now in view of Lemma
 \ref{lem:tauapprox:derivatives} the left hand-side converges uniformly
 in $\theta\in C$ as $\tau \to 0$. 
\end{proof} 
The next proposition relies on \cite[Theorems 6.5 and 6.6]{brant:1984}, and 
for convenience we state them.
\begin{theorem}[Theorem 6.5 in \cite{brant:1984}]
\label{thm65:brant:1984}
 Let $h(x)$ be a differentiable function of period $2\pi/\tau$, such that $|h'(x)|<A$ for all $x$. Then
 there exists a trigonometric polynomial of degree $k$, $q(x)=a_0+\sum_{j=1}^k (a_j\cos j\tau x + b_j \sin j\tau x)$, 
 satisfying 
 \[
  \sup_{x\in \R} |q(x)-h(x)| \le KA /(k\tau), 
 \]
 where $K$ is an absolute constant. 
\end{theorem}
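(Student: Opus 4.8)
The plan is to reduce the statement to the classical Jackson approximation theorem for $2\pi$-periodic Lipschitz functions and to establish the latter by convolving against a nonnegative Jackson kernel. First I would rescale: setting $g(t)=h(t/\tau)$ gives a $2\pi$-periodic function with $g'(t)=h'(t/\tau)/\tau$, so $|g'(t)|\le A/\tau=:M$ for all $t$, and by the mean value theorem $g$ is globally $M$-Lipschitz. Any trigonometric polynomial $Q(t)=a_0+\sum_{j=1}^k(a_j\cos jt+b_j\sin jt)$ of degree $k$ produces $q(x):=Q(\tau x)=a_0+\sum_{j=1}^k(a_j\cos j\tau x+b_j\sin j\tau x)$, which has exactly the required form, and $\sup_{x\in\R}|q(x)-h(x)|=\sup_{t\in\R}|Q(t)-g(t)|$. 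Thus it suffices to approximate $g$ within $cM/k$ by a degree-$k$ trigonometric polynomial, since undoing the scaling then yields a bound of the form $cA/(k\tau)$.

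Next I would construct the kernel. Fix an integer $m$ with $2(m-1)\le k$, so that $m\asymp k/2$, and set
\[
J(t)=\lambda\Big(\frac{\sin(mt/2)}{\sin(t/2)}\Big)^{4},
\]
where $\lambda>0$ is chosen so that $\frac{1}{2\pi}\int_{-\pi}^{\pi}J(t)\,dt=1$. Being proportional to the square of the Fej\'er kernel of order $m-1$, the function $J$ is a nonnegative trigonometric polynomial of degree $2(m-1)\le k$. Define $Q=J*g$, that is $Q(t)=\frac{1}{2\pi}\int_{-\pi}^{\pi}J(t-s)g(s)\,ds$. Since the Fourier coefficients satisfy $\widehat{Q}_j=\widehat{J}_j\,\widehat{g}_j$ and $\widehat{J}_j=0$ for $|j|>2(m-1)$, the convolution $Q$ is a trigonometric polynomial of degree $\le k$ (pad with zero coefficients up to degree $k$).

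For the error bound I would use $\frac{1}{2\pi}\int_{-\pi}^{\pi}J=1$ together with the Lipschitz estimate $|g(t-s)-g(t)|\le M|s|$ to write
\[
|Q(t)-g(t)|=\Big|\frac{1}{2\pi}\int_{-\pi}^{\pi}J(s)\big(g(t-s)-g(t)\big)\,ds\Big|\le\frac{M}{2\pi}\int_{-\pi}^{\pi}|s|\,J(s)\,ds.
\]
The crux, and the step I expect to be the main obstacle, is the moment estimate $\frac{1}{2\pi}\int_{-\pi}^{\pi}|s|\,J(s)\,ds\le c/m$ for an absolute constant $c$; this is precisely why the fourth power is taken, since the Fej\'er kernel (the second power) yields only the weaker rate $O((\log m)/m)$. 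To obtain the sharp $1/m$ rate I would first show $\lambda\asymp m^{-3}$ from $\int_{-\pi}^{\pi}(\sin(mt/2)/\sin(t/2))^{4}\,dt\asymp m^{3}$, and then split $\int_{-\pi}^{\pi}|s|(\sin(ms/2)/\sin(s/2))^{4}\,ds$ into the region $|s|\le 1/m$, where $|\sin(ms/2)/\sin(s/2)|\le m$ gives a contribution of order $m^{2}$, and the region $1/m\le|s|\le\pi$, where $|\sin(s/2)|\ge c'|s|$ makes the integrand $\le C|s|^{-3}$ and again contributes order $m^{2}$; together these yield $\frac{1}{2\pi}\int|s|\,J\le c\,m^{-3}\cdot m^{2}=c/m$.

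Combining the pieces gives $\sup_{t}|Q(t)-g(t)|\le cM/m\le c'M/k$ because $m\asymp k/2$, and undoing the rescaling yields $\sup_{x\in\R}|q(x)-h(x)|\le c'A/(k\tau)$, which is the assertion with $K=c'$ an absolute constant independent of $h,\tau,k$ and $A$.
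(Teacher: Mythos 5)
Your proposal is correct. Note, however, that the paper itself contains no proof of this statement: it is quoted verbatim as Theorem 6.5 of Brant (1984), with a remark pointing further back to Theorem 1, p.~2 of Jackson (1930) as the actual source, in particular for the fact that $K$ is an absolute constant. What you have written is a complete, self-contained proof of exactly that underlying classical result: after rescaling to period $2\pi$ (turning the derivative bound $A$ into the Lipschitz constant $M=A/\tau$), you convolve with the Jackson kernel $\lambda\bigl(\sin(mt/2)/\sin(t/2)\bigr)^{4}$, i.e.\ the normalized square of the Fej\'er kernel, and the whole matter reduces to the first-moment estimate $\frac{1}{2\pi}\int_{-\pi}^{\pi}|s|J(s)\,ds\le c/m$, which you establish correctly via $\lambda\asymp m^{-3}$ and the split at $|s|=1/m$ (using $|\sin(ms/2)|\le m|\sin(s/2)|$ on the inner region and $|\sin(s/2)|\ge|s|/\pi$ on the outer one). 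This is essentially the classical argument behind the citation, so in substance you are reconstructing the proof the paper delegates to Jackson; what your version buys is self-containedness and an explicit verification of the point the paper only asserts, namely that the constant $K$ depends on nothing but absolute quantities (not on $h$, $A$, $k$, or $\tau$). Two cosmetic points you may wish to make explicit: the elementary inequality $|\sin m\theta|\le m|\sin\theta|$ that justifies the bound on the inner region, and the degenerate case $k=1$ (where $m=1$ and the kernel is constant), which your moment bound still covers with a suitable absolute constant.
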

 Notice that Theorem \ref{thm65:brant:1984} 
 is based on \cite[Theorem 1, p.2]{jackson:1930}, where one could see more precisely that $K$ does not
 depend on any specialization of the corresponding functions. 
\begin{theorem}[Theorem 6.6 in \cite{brant:1984}]
\label{thm66:brant:1984}
 Let $h(x)$ be continuous, and let $r(x)$ and $s(x)$ be $k$th degree trigonometric polynomials in 
$x$ such that 
\begin{align}
\label{brant:thm66:c1}
  \sup_{x\in I_\tau} |h(x)-r(x)|<\delta 
\end{align}
and 
\begin{align}
\label{brant:thm66:c2} 
\int_{I_\tau} |h(x)-s(x)|^2 dx \le d. 
\end{align}
Then 
\[
 \sup_{x\in I_\tau} |h(x)-s(x)| \le 4 (k\tau d)^{1/2} + 5 \delta.
\]
\end{theorem}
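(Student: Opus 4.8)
The plan is to reduce everything to a single trigonometric polynomial, namely $p := r - s$, which has degree at most $k$, and to control its sup-norm by its local $L^2$-mass. By the triangle inequality,
\[
\sup_{x\in I_\tau}|h(x)-s(x)| \le \sup_{x\in I_\tau}|h(x)-r(x)| + \sup_{x\in I_\tau}|r(x)-s(x)| < \delta + \|p\|_\infty ,
\]
so it suffices to bound $P := \|p\|_\infty$. The one external ingredient I would invoke is Bernstein's inequality for trigonometric polynomials: since $p$ is a degree-$k$ polynomial in the variable $\tau x$, one has $\|p'\|_\infty \le k\tau\,\|p\|_\infty = k\tau P$.

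Next I would localize. Let $x^*$ be a point (which, by periodicity of $p$, may be chosen so that the interval below lies in a single period) where $|p(x^*)| = P$. Bernstein's bound gives $|p(x)| \ge P - k\tau P\,|x-x^*|$, so on the interval $J := [x^*-\tfrac{1}{2k\tau},\,x^*+\tfrac{1}{2k\tau}]$, of length $1/(k\tau)$, we have $|p| \ge P/2$. On $J$ the uniform closeness of $r$ to $h$ then yields $|h-s| \ge |r-s| - |h-r| > P/2 - \delta$. Assuming for the moment $P/2 > \delta$, I would integrate the hypothesis $\int_{I_\tau}|h-s|^2 \le d$ only over $J$:
\[
d \ge \int_J |h-s|^2\,dx \ge (P/2-\delta)^2\,|J| = \frac{(P/2-\delta)^2}{k\tau},
\]
whence $P/2 - \delta \le (k\tau d)^{1/2}$, i.e. $P \le 2(k\tau d)^{1/2} + 2\delta$. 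Combined with the first display this gives $\sup_{x\in I_\tau}|h-s| < 2(k\tau d)^{1/2} + 3\delta$, which is even stronger than the claimed $4(k\tau d)^{1/2}+5\delta$; the degenerate case $P \le 2\delta$ is immediate, since then $\sup_{x\in I_\tau}|h-s| < 3\delta$.

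The step I expect to be delicate — and indeed the whole point of the argument — is keeping the contribution of the uniform error $\delta$ bounded by a constant multiple of $\delta$ rather than by $\sqrt{k}\,\delta$. A naive Nikolskii-type estimate, bounding $\|h-r\|_{L^2(I_\tau)} \le \delta\,(2\pi/\tau)^{1/2}$ over the full period and then converting $L^2$ to $L^\infty$ with the factor $(k\tau)^{1/2}$, produces a term of order $\sqrt{k}\,\delta$ that destroys the estimate. Localizing the $L^2$ integration to the short interval $J$ of width $1/(k\tau)$ — whose width is dictated precisely by the Bernstein derivative bound — is exactly what forces the $\delta$-contribution down to $O(\delta)$. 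The only remaining bookkeeping is the periodicity convention ensuring that $J$ sits inside one period of length $2\pi/\tau$ (legitimate since $1/(k\tau) \le 2\pi/\tau$), so that both the uniform bound on $|h-r|$ and the $L^2$ bound on $|h-s|$ may be applied throughout $J$.
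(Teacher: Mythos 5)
The paper itself does not prove this statement: it is quoted verbatim from Brant \cite[Theorem 6.6]{brant:1984} as an imported tool, so there is no internal proof to compare yours against. Judged on its own merits, your argument---set $p:=r-s$, control $P:=\|p\|_\infty$ via Bernstein's inequality ($\|p'\|_\infty\le k\tau\,\|p\|_\infty$ for a degree-$k$ polynomial in the variable $\tau x$), localize to an interval $J$ of length $1/(k\tau)$ around a maximizer on which $|p|\ge P/2$, and integrate the $L^2$ hypothesis only over $J$---is sound, and it even yields the sharper bound $2(k\tau d)^{1/2}+3\delta$, which implies the stated $4(k\tau d)^{1/2}+5\delta$. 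Your diagnosis of why a naive Nikolskii-type conversion fails (it produces a term of order $\sqrt{k}\,\delta$) is also exactly right, and the localization width dictated by Bernstein is indeed the idea that avoids it.

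One detail needs to be stated more carefully than ``so that the interval lies in a single period.'' The hypotheses involving $h$ hold only on $I_\tau$, and $h$ is not assumed periodic, so what you actually need is that the set over which you integrate is contained in $I_\tau$, not merely in some interval of period length. A maximizer $x^*$ of $|p|$ can be taken in $I_\tau$, but it may lie within $1/(2k\tau)$ of an endpoint, in which case $J\not\subset I_\tau$. Two standard repairs: (a) wrap $J$ into $I_\tau$ modulo the period $2\pi/\tau$; since $p$ (unlike $h$) is periodic, $|p|\ge P/2$ still holds on the wrapped set, which has measure $1/(k\tau)$, lies in $I_\tau$, and is therefore a set where both $|h-r|<\delta$ and the $L^2$ bound on $h-s$ may be used; or (b) keep only the one-sided interval $[x^*,x^*+1/(2k\tau)]$ or $[x^*-1/(2k\tau),x^*]$, at least one of which lies in $I_\tau$ because $1/(k\tau)\le 2\pi/\tau$ for $k\ge1$; this halves the measure and costs a factor $\sqrt{2}$, giving $2\sqrt{2}\,(k\tau d)^{1/2}+3\delta$, still below the target. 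With either repair your proof is complete.
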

\begin{proposition}
\label{prop:upperbounds}
There exists $\tau_0>0$ such that when $\tau \le \tau_0$ we could take 
an upper bound $A>0$ with which $|S_{\tau,i}'(x;\theta)|<A,\,i=1,\ldots,4$ hold 
uniformly in $\tau\le \tau_0$, $x\in \R$ and $\theta\in C$. 
Moreover, for any $i$,  
\begin{align}
\label{ineq:brantlike}
 \sup_{x\in I_\tau} |S_{\tau,i}(x;\theta)-\wt S_i(x;\theta)| \le KA
 \{4(\tau^{2+\alpha} k )^{-1/2} + 5(\tau k)^{-1} \},
\end{align}
where $K$ is a positive constant which does not depend on $\tau,\,k$
 and $\theta$. 
\end{proposition}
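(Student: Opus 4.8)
The plan is to treat the two assertions of Proposition~\ref{prop:upperbounds} in sequence. The uniform bound on $S_{\tau,i}'$ supplies exactly the hypothesis $|h'|<A$ needed to invoke Brant's Theorem~\ref{thm65:brant:1984}, and the displayed inequality \eqref{ineq:brantlike} then emerges by feeding the $L^\infty$-approximant from Theorem~\ref{thm65:brant:1984} together with the projection $\wt S_i$ into Theorem~\ref{thm66:brant:1984}. The only genuinely analytic input beyond these two theorems is the control of $S_\tau$ and its $x$-derivative in the tail region of $I_\tau$, where the wrapped density $f_\tau$ becomes small.

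For the first claim I would differentiate $S_\tau = f_{\tau,\theta}/f_\tau$ in $x$ to obtain
\[
S_{\tau,i}'(x;\theta) = \frac{f_{\tau,\theta_i}'(x;\theta)}{f_\tau(x;\theta)} - S_{\tau,i}(x;\theta)\,\frac{f_\tau'(x;\theta)}{f_\tau(x;\theta)},
\]
which is well defined and $2\pi/\tau$-periodic since $f_\tau>0$ is smooth. I would bound this uniformly by splitting $I_\tau$ into a fixed central block $[-M,M]$ and its complement. On $[-M,M]$, Lemma~\ref{lem:tauapprox:derivatives} gives $f_\tau\to f$ with $f$ bounded below by a positive constant, while the numerators $f_{\tau,\theta_i}'$, $f_{\tau,\theta_i}$, $f_\tau'$ stay bounded; hence the ratios are bounded for all small $\tau$ and all $\theta\in C$. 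On the tails I would invoke the tail orders $f\sim c|x|^{-(1+\alpha)}$, $f'=O(|x|^{-(2+\alpha)})$, $f_{\theta_i}=O(|x|^{-(1+\alpha)}\log|x|)$, and $f_{\theta_i}'=O(|x|^{-(2+\alpha)}\log|x|)$ from Lemma~2.1 of \cite{matsui:2019}, together with $f_\tau\ge f$ and the smallness of the wrap corrections from Lemma~\ref{lem:tauapprox:derivatives}. This yields $f_\tau'/f_\tau=O(|x|^{-1})$, $f_{\tau,\theta_i}'/f_\tau=O(|x|^{-1}\log|x|)$, and, via Lemma~\ref{lem:uniconti:s} and $S=O(\log|x|)$, $S_{\tau,i}=O(\log|x|)$, so that $|S_{\tau,i}'|=O(|x|^{-1}\log|x|)$ even vanishes at the endpoint $|x|=\pi/\tau$. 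Taking $A$ to be the maximum of the two bounds furnishes the required constant, uniform in $x$, $\theta\in C$, and $\tau\le\tau_0$.

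For \eqref{ineq:brantlike} I would first record the lower bound $\inf_{x\in I_\tau} f_\tau(x;\theta)\ge c\,\tau^{1+\alpha}$, which comes from the same tail asymptotics: the minimum of the periodic $f_\tau$ sits at the endpoints $\pm\pi/\tau$, where the two dominant wrapped copies each contribute $\sim c(\pi/\tau)^{-(1+\alpha)}$. Theorem~\ref{thm65:brant:1984} applied to $h=S_{\tau,i}$ then produces a degree-$k$ trigonometric polynomial $r$ with $\sup_{\R}|S_{\tau,i}-r|\le KA/(k\tau)=:\delta$, and $r$ lies in the span of $\{1,g\}$ because its frequencies are exactly $\tau,2\tau,\dots,k\tau$. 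Since $\wt S_i$ is the $L^2(f_\tau)$-orthogonal projection of $S_{\tau,i}$ onto that span, minimality gives
\[
\int_{I_\tau}|S_{\tau,i}-\wt S_i|^2 f_\tau\,dx \le \int_{I_\tau}|S_{\tau,i}-r|^2 f_\tau\,dx \le \delta^2\int_{I_\tau} f_\tau\,dx=\delta^2,
\]
and dividing by the lower bound on $f_\tau$ converts this into the unweighted estimate $\int_{I_\tau}|S_{\tau,i}-\wt S_i|^2\,dx\le \delta^2/(c\tau^{1+\alpha})=:d$. Substituting $\delta$ and $d$ into Theorem~\ref{thm66:brant:1984} with $s=\wt S_i$ gives $\sup_{I_\tau}|S_{\tau,i}-\wt S_i|\le 4(k\tau d)^{1/2}+5\delta$; the first term collapses to a multiple of $(\tau^{2+\alpha}k)^{-1/2}$ and the second to a multiple of $(\tau k)^{-1}$, which after absorbing $c$ into the generic constant is exactly \eqref{ineq:brantlike}.

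The main obstacle is the uniform bound on $S_{\tau,i}'$: in each ratio both numerator and denominator decay in the tail, so boundedness hinges on verifying that $f_{\tau,\theta_i}'$ and $f_\tau'$ decay at least as fast as $f_\tau$ right up to the endpoints of $I_\tau$, where $f_\tau$ is only of order $\tau^{1+\alpha}$. The companion lower bound $\inf_{I_\tau}f_\tau\gtrsim \tau^{1+\alpha}$ is what makes the weighted-to-unweighted conversion produce precisely the exponent $2+\alpha$ appearing in \eqref{ineq:brantlike}; tracking its constant carefully is the delicate point of the argument.
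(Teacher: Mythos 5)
Your proposal is correct and follows essentially the same route as the paper: a uniform bound on $S_{\tau,i}'$ built from the wrap-correction estimates of Lemma \ref{lem:tauapprox:derivatives} and the tail orders of $f,f',f_{\theta_i},f_{\theta_i}'$ in \cite{matsui:2019}, the lower bound $\inf_{I_\tau} f_\tau \ge c\,\tau^{1+\alpha}$, and then Theorem \ref{thm65:brant:1984} feeding into Theorem \ref{thm66:brant:1984} via the $L^2(f_\tau)$-projection minimality of $\wt S_i$, with the same bookkeeping $\delta=KA/(k\tau)$ and $d=\delta^2/(c\tau^{1+\alpha})$. The only cosmetic difference is in the derivative bound, where you split $I_\tau$ into a central block and tails and use $f_\tau\ge f\ge c|x|^{-(1+\alpha)}$, whereas the paper works with the global decomposition $J_1-J_2$ and the inequality $f_\tau^2 \ge c f \tau^{1+\alpha}$; the ingredients and conclusions coincide.
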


\begin{proof}
By the periodicity, it is enough to show that there exists $\tau_0>0$ such that 
$\sup_{\tau \le \tau_0} \sup_{x \in I_\tau,  \theta \in C} |S_{\tau,i}'(x;\theta)| <A$.
For convenience we omit subscript $i$ of $\theta_i$ and write, 
for instance, $f_\theta,\,f_{\tau,\theta},\,f_\theta'$ and $f_{\tau,\theta}'$
 since the proof is valid for all $i$. 
 Accordingly $i$ in $S_{\tau,i}$
 and $\wt S_{i}$ and their derivatives is also abbreviated. 
Firstly we consider $|S_{\tau,i}'(x;\theta)|$. 
We evaluate 
\[
 S_{\tau}'(x) = \frac{f_{\tau,\theta}'(x) f_\tau(x)}{f_\tau (x)^2} -
 \frac{f_\tau'(x)f_{\tau,\theta}(x)}{f_\tau(x)^2 } := J_{1}-J_{2}. 
\]
The numerator of $J_1$ is dominated by 
\begin{align*}
 |f_{\tau,\theta}' f_\tau | &\le |
 f_{\tau,\theta}'-f_\theta'||f_\tau-f|+ |
f_{\tau, \theta}'-f_{\theta}'|f
 + |f_\theta'||f_\tau-f|+ |f_\theta'f| \\
& \le c\big\{\tau^{3+2\alpha} \log 1/\tau + \tau^{2+\alpha} \log 1/\tau
 \cdot f
 +\tau^{1+\alpha} |f_\theta'|  + |f_\theta' f| \big\},
\end{align*}
where in the last step we use Lemma
 \ref{lem:tauapprox:derivatives}. Similarly the 
 numerator of $J_2$ is
 bounded by 
\begin{align*}
 |f_{\tau}' f_{\tau,\theta} | &\le |
 f_{\tau}'-f'||f_{\tau,\theta}-f_\theta|+ |f_{\tau}'-f'||f_\theta|
 + |f'||f_{\tau,\theta}-f_\theta|+ |f'f_\theta| \\
& \le c\big\{ \tau^{3+2\alpha} \log 1/\tau + \tau^{2+\alpha}|f_\theta|
 + \tau^{1+\alpha} \log 1/\tau \cdot |f'| + |f'f_\theta | \big\}.
\end{align*}
Next the denominator is evaluated. It follows by the definition that  
\[
 f_\tau(x)-f(x)= \sum_{j\neq 0} f(x+2\pi j/\tau).
\]
We recall that for sufficiently large $|x|$, there exists a uniform constant $\underline{c}>0$ in $\theta\in C$ such that 
$f(x;\theta) \ge \underline{c} |x|^{-(1+\alpha)}$. 
Thus, if $x\in I_\tau$, so that $\pi/\tau (2j-1) \le
 x+2\pi j/\tau \le \pi/\tau (2j+1)$, 
 we have 
 \[
  \sum_{j \neq 0} f(x+2\pi j/\tau) \ge \underline{c} \sum_{j\neq 0} |3\pi
 j/\tau|^{-(1+\alpha)} \ge c'\tau^{1+\alpha},
 \]
for sufficiently small $\tau$. 
Hence $f_\tau (x)-f(x) \ge c \tau^{1+\alpha}$ on $x\in I_\tau$, which yields 
\begin{align}
\label{ineq:denom}
 f_\tau(x;\theta)^2 \ge c (f(x;\theta)^2+\tau^{2(1+\alpha)}) \ge
 c' f(x;\theta) \tau^{1+\alpha}.
\end{align}
Now, correcting the above results, we obtain 
\[
 J_1 \le c\big(
\tau \log 1/\tau + |f_\theta'/f|
\big)\quad \text{and}\quad 
 J_2 \le c \big(\tau \log 1/\tau +\tau |f_\theta/f| +|f'/f^2 |\tau^{1+\alpha} \log 1/\tau+|f'f_\theta/f^2 | \big). 
\]
Then due to the tail behaviors of $f,f',f_\theta$ and $f_\theta'$ in
 Lemma 2.1 in \cite{matsui:2019}, for some small $\tau_0>0$ it is easy to find a uniform bound
 which does not depend on $\tau \le \tau_0$, $x \in I_\tau$, and even $\theta \in C$.  
 We specially notice that $|f'/f^2|=O(|x|^\alpha)$ and $|f'f_\theta /f^2|=O(|x|^{-1}\log |x|)$ for sufficiently large $|x|$. 
 Thus, we prove the existence of $A$ such that $|S_{\tau,i}'| \le A $ for
 all $i$. 

 For the inequality \eqref{ineq:brantlike}, we rely on Theorem \ref{thm66:brant:1984}
 and will check its conditions \eqref{brant:thm66:c1} and \eqref{brant:thm66:c2}. 
 In our case, $h(x)=S_\tau(x;\theta)$ and $s(x)=\wt S(x;\theta)$. 
 Due to Theorem \ref{thm65:brant:1984} and the result just
 proved, \eqref{brant:thm66:c1} holds with $\delta=KA/(k\tau)$, where 
 $r (x)$ is an unspecified trigonometric polynomial. For \eqref{brant:thm66:c2}
 let $m_\tau = \min_{x\in I_\tau} f_\tau(x;\theta)\ge c \tau^{1+\alpha}$
 and we obtain 
\begin{align*}
 \int_{I_\tau} (S_\tau(x;\theta)-\wt S(x;\theta))^2 dx &\le
 \frac{1}{m_\tau} \int_{I_\tau} |\wt S(x;\theta)-S_\tau(x;\theta)|^2
 f_\tau(x;\theta)dx \\
&\le \frac{1}{m_\tau} \int_{I_\tau} |r (x)-S_\tau(x;\theta)|^2
 f_\tau (x;\theta)dx \\
&\le \frac{1}{m_\tau} \Big|\frac{KA}{k\tau}\Big|^2 \le c \Bigg(
\frac{KA}{k\tau^{(3+\alpha)/2} }
\Bigg)^2,  
\end{align*}
where the second inequality follows from the fact that $\wt S(x;\theta)$ is
 the projection of $S_\tau(x;\theta)$ on $L^2_\tau (f)$. Now we get
 \eqref{ineq:brantlike} through Theorem \ref{thm66:brant:1984}. 
\end{proof}

\subsection{Proof of Theorem \ref{thm:consistency}}
\label{append:thm:consitency}

\begin{proof}[Proof of Theorem \ref{thm:consistency}]
Because $\delta$ can be arbitrarily small without loss of generality, we assume that $\bar{B}_\delta(\theta_0) \subset C$. 
Thus, we confine our parameter space to $\bar B_\delta(\theta_0)$ and show that any sequence $\tilde \theta_n$ such that $\tilde \theta_n \in \bar B_\delta (\theta_0)$ and $|\wt S_n(\tilde \theta_n)|=o_p(1)$ also satisfies $\tilde \theta_n \stackrel{p}{\to} \theta_0$.

Let $\wt S(\theta) =\E_{\theta_0}[\wt S(X; \theta)]$.
Then, by \cite[Theorem 5.7]{vandervaat:2000}, it is enough to show that there exist $\delta >0$ such that for any $0 < \varepsilon \le \delta/2$
\begin{align}
\label{def:uniform}
\inf_{\theta \in \bar{B}_\delta(\theta_0),\,|\theta-\theta_0| \ge
	\varepsilon} |\wt S(\theta)| > |\wt S(\theta_0)| =0
\end{align}
and that 
\begin{align}
\label{def:uniconv}
\sup_{\theta \in \bar{B}_\delta(\theta_0)} |\wt S_n(\theta)-\wt S(\theta)|
\stackrel{p}{\to} 0.
\end{align}
Notice that the TML estimator is included in the class of estimators we consider.
Indeed, let $\hat \theta_n =\arg \min_{\theta \in \bar B_\delta(\theta_0)}\,|\wt S_n(\theta)|$.
Then we see easily from \eqref{def:uniconv} that $|\wt S_n(\hat \theta_n)|\le |\wt S_n(\theta_0)| \stackrel{p}{\to} 
|\wt S(\theta_0)|=0$.

For any given $k$ and $\tau >0$, the condition \eqref{def:uniform} follows from Lemma \ref{lem:auxiliary1} and continuity of $\tilde{I}(\theta)$.
In case $\tau \to 0$, we have $\wt I (\theta)\to I(\theta)$ for all $\theta \in \bar B_\delta(\theta_0)$ 
if $k^{-1} = o(\tau^{2+\alpha_0+\delta})$, which is established at the end of the proof .
Because $I(\theta)$ is continuous and positive definite at $\theta_0$ (see the proof of \cite[Theorem 3.1]{matsui:2019}), 
we establish the first condition even when $\tau \to 0$.

The condition \eqref{def:uniconv} is satisfied if each element of $\wt S(x;\theta)$ is continuous in $\theta$ for every $x$ and is bounded by an integrable function (the envelope condition) \cite[p.46]{vandervaat:2000}.
As the proof is similar for any elements, we only establish the result for $\wt S_3(x;\theta)$, the third element of $\wt S(x;\theta)$. 
It is clear that $\wt S_3(x; \theta)$ is continuous in $\theta$ for any fixed $k$ and $\tau$.
Moreover, it follows from Proposition \ref{prop:upperbounds} and Lemma \ref{lem:uniconti:s} that $\wt S_3(x; \theta) \to S_3(x; \theta)$ 
uniformly over $\theta \in \bar B_\delta(\theta_0)$ if $\tau \to 0$ and $k^{-1}=o(\tau^{2+\alpha_0+\delta})$.
Since $S_3(x; \theta)$ is continuous in $\theta$, so is $\wt S_3(x; \theta)$ even when $\tau \to 0$.
For the envelope condition, we use the following inequality
\begin{align}
 \label{wtS:decomp1}
 |\wt S_3 (x;\theta)| \le |S_{\tau,3} (x;\theta)| +|\wt S_3(x;\theta)-S_{\tau,3}(x;\theta)|.
\end{align}
The second term, which is periodic with period $2\pi/\tau$, has the same bound as that of Proposition \ref{prop:upperbounds}.
Thus, if $k^{-1}=o(\tau^{2+\alpha_0+\delta})$, it is bounded for any $\theta \in \bar B_\delta(\theta_0)$. 
For the first term, we have
\begin{align}
\label{wtS:decomp2}
|S_{\tau,3}(x;\theta)| \le \frac{|f_\alpha (x;\theta)|}{f(x;\theta)} +
\frac{\sum_{j\neq 0}
	|f_\alpha (x+2\pi j/\tau;\theta)|}{f_{\tau}(x;\theta)}
=:|S_3(x;\theta)|+ R_3(x;\theta). 
\end{align}
The first part satisfies $O(\log |x|)$ for sufficiently large $|x|$
(see \cite[Proposition 2.3]{matsui:2019}).
Since $S_{\tau,3}(x;\theta)$ is periodic, we consider the bound of $R_3(x;\theta)$ on $I_\tau$.

The interval $I_\tau$ is further divided into two regions: $|x|\le M$ and $M<|x| \le \pi/\tau$, 
where the constant $M$ is chosen such that 
$\underline{c} x^{-(1+\alpha)} \le f(x;\theta) \le \ov{c}
x^{-(1+\alpha)}$ holds for all $|x| >M$ and all $\theta\in C$. 
This is possible due to the tail property of stable laws (see, e.g. \cite[Proposition 1.2.15]{samorodnitsky:taqqu:1994}). 
Here, $\underline{c} \le \ov{c}$ are constants that do not depend on $\theta$. Since 
the tail of $|f_\alpha(x;\theta)|$ is bounded by $c|x|^{-(1+\alpha)} \log |x|$, which is decreasing on $|x|>M\, (\ge e)$, 
the 
numerator of $R_3$ is bounded on $|x|\le \pi/\tau$ by 
\begin{align*}
c \sum_{j\neq 0} |x+2\pi j/\tau|^{-(1+\alpha)} \log |x+2\pi
j/\tau| 
& \le c \sum_{j\neq 0} |xj|^{-(1+\alpha)} \log |xj| \\
& \le c |x|^{-(1+\alpha)} \log |x|, 
\end{align*}
while the denominator has the lower bound $f(x; \theta) \ge c |x|^{-(1+\alpha)}$ on $|x|>M$. 
Thus, 
\begin{align}
\label{supR1}
\sup_{\theta \in C} R_3 (x;\theta)\le c {\bf 1}_{\{|x|\le M\}} +c'\log |x|{\bf 1}_{\{|x| > M \}}. 
\end{align}
This together with the bound for $|S_3(x;\theta)|$ yields 
\begin{align}
\label{supstau}
\sup_{\theta \in C}|S_{\tau,3}(x;\theta)| \le c +c'\log |x| 
\end{align}
on $I_\tau$.
Since $S_{\tau,3}$ is periodic, the bound holds for all $x \in \R$. 
Thus we obtain
\[
\int_{-\infty}^\infty \sup_{\theta\in C}|S_{\tau,3}(x;\theta)| f(x;\theta_0)dx \le c+c'\int_{|x|>M} |x|^{-(1+\alpha_0)} \log |x| dx <\infty. 
\]

Finally we prove $\wt I(\theta)\to I(\theta)$. Since the proofs are similar, we only show the convergence for the information of $\alpha$, that is,
$\wt I_\alpha(\theta)\to I_\alpha (\theta)$. Since $\wt S_3(x;\theta)\to S_3(x;\theta)$ pointwise in $x$, in view of 
\[
\wt I_\alpha(\theta) = \int \wt S_3(x;\theta)^2 f(x;\theta)dx, 
\]
it suffices to show that $\wt S_3(x;\theta)^2$ has an integrable dominant function
w.r.t. the measure $f(x;\theta)dx$. 
Observe that 
\begin{align*}
|\wt S_3 (x;\theta)|^2 &\le  2 \big(S_{\tau,3} (x;\theta)^2 +|\wt S_3(x;\theta)-S_{\tau,3}(x;\theta)|^2 \big), 
\end{align*}
where the second term has the bound \eqref{ineq:brantlike}. For the first term, we notice that \eqref{supstau} holds for 
all sufficiently small $\tau>0$ and the bound does not depend on $\tau>0$ (see the derivation process also). Thus, there exists a $\tau_0>0$ such that 
for all $\tau\le \tau_0$, 
if $k^{-1}=o(\tau^{2+\alpha_0+\delta})$, $ \sup_{\theta\in \bar B_\delta(\theta_0)} |\wt S_{3}(x;\theta)|^2 \le (c+c'\log |x|)^2$ holds. 
Hence $\wt S_3(x;\theta)^2$ has a dominant integrable function. 
\end{proof}

\subsection{Proof of Theorem \ref{thm:asympeffi}}
\label{subsec:lemma:asymptoticnormal}
We first show $\sqrt{n}$-consistency of $\hat \theta_n$ by using a similar argument with the proof of \cite[Theorem 5.21]{vandervaat:2000}. 
Here we use the fact that $\wt S(x; \theta)$ can be arbitrarily close to the score function $S(x;\theta)$ of the ML estimator, and establish the result by using the properties of $S(x;\theta)$.
The condition on $k$ and $\tau$ assures us that $\wt S$ and $S$ are close enough for our purpose. 
To obtain the asymptotic distribution of $\hat \theta_n$, we show that $n^{1/2}(\hat \theta_n-\tilde \theta_n) \stackrel{p}{\to} 0$, where $\tilde{\theta}_n$ is the ML estimator.
This result implies that the TML and ML estimators are first-order equivalent.

Before the proof we need the following lemma which gives 
a moment bound for the stochastic distance of $\wt S_n$ and $S_n$. The bound is proved to be a 
function of $k$ and $\tau$. 
The proof is rather technical, and we fully exploit the two-step approximation scheme of Section \ref{subsec:asymptQMLinfty}. 
\begin{lemma}
\label{lemma:asymptoticnormal}
 Let $B_\varepsilon(\theta_0)=\{\theta:|\theta-\theta_0|<\varepsilon\}$.
Then for arbitrary small $\varepsilon >0$, 
\begin{align*}
 \E_{\theta_0} \big [\sup_{\theta \in B_\varepsilon (\theta_0)}|\wt
 S(X;\theta)-S(X;\theta)| \big]\le c \tau^{\alpha_0} \log 1/\tau + c(\tau^{2+\alpha_0+\varepsilon}k)^{-1/2}. 
\end{align*}
\end{lemma}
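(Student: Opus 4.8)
The plan is to follow the two-step approximation $\wt S \to S_\tau \to S$ of Subsection~\ref{subsec:asymptQMLinfty} and bound the two resulting pieces separately via
\[
|\wt S(X;\theta)-S(X;\theta)| \le |\wt S(X;\theta)-S_\tau(X;\theta)| + |S_\tau(X;\theta)-S(X;\theta)|.
\]
The first piece is the easy one: since $\wt S$ and $S_\tau$ are both $2\pi/\tau$-periodic, the estimate \eqref{ineq:brantlike} of Proposition~\ref{prop:upperbounds} holds for all $x\in\R$ and is deterministic, so it survives $\E_{\theta_0}$ untouched. Taking the supremum over $\theta\in B_\varepsilon(\theta_0)\subset C$ forces $\alpha\le\alpha_0+\varepsilon$, and since $\tau<1$ the factor $(\tau^{2+\alpha}k)^{-1/2}$ is largest at $\alpha=\alpha_0+\varepsilon$; moreover $(\tau k)^{-1}\le(\tau^{2+\alpha_0+\varepsilon}k)^{-1/2}$ as soon as $\tau^{\alpha_0+\varepsilon}\le k$, which holds for $\tau\le 1$, $k\ge 1$. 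Hence this piece contributes at most $c(\tau^{2+\alpha_0+\varepsilon}k)^{-1/2}$, the second term of the claim.

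For the second piece I would start from the decomposition used in the proof of Lemma~\ref{lem:uniconti:s},
\[
|S_\tau(x;\theta)-S(x;\theta)| \le \frac{|f_{\tau,\theta}-f_\theta|}{f_\tau} + |S|\,\frac{|f_\tau-f|}{f_\tau},
\]
where I suppress the index $i$ because the estimate is uniform in it, and split $\R$ into $\{|x|\le M\}$, $\{M<|x|\le\pi/\tau\}\subset I_\tau$, and $\{|x|>\pi/\tau\}$. Here $M$ is the tail threshold with $\underline{c}\,|x|^{-(1+\alpha)}\le f(x;\theta)\le\overline{c}\,|x|^{-(1+\alpha)}$ for $|x|>M$, uniformly in $\theta\in C$. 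On the central region $f$ is bounded below, so Lemma~\ref{lem:tauapprox:derivatives} makes the integrand $O(\tau^{1+\alpha_0-\varepsilon}\log 1/\tau)$, negligible against $\tau^{\alpha_0}$. On the outer region I would use periodicity together with the crude bounds $\sup_{\theta}|S_\tau|\le c+c'\log|x|$ from \eqref{supstau} and $|S|=O(\log|x|)$ from \cite[Proposition~2.3]{matsui:2019}; then $\int_{\pi/\tau}^\infty(1+\log x)\,x^{-(1+\alpha_0)}\,dx\le c\tau^{\alpha_0}\log 1/\tau$ because $f(x;\theta_0)$ has tail exponent $\alpha_0$.

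The middle region, with the supremum over $\theta$ taken inside the integral, is the step I expect to be the main obstacle. Using $f_\tau\ge f$, Lemma~\ref{lem:tauapprox:derivatives} and the two-sided tail estimate, each summand is controlled by a logarithmic factor ($\log 1/\tau$ for the first, $\log|x|$ for the second) times $\sup_{\alpha}(\tau|x|)^{1+\alpha}$. The naive bound that inserts the extremal $\alpha$ separately into the numerator and the denominator fails, costing a spurious factor $\tau^{-2\varepsilon}$; instead one must keep the exponents coupled and use
\[
\sup_{\alpha\in[\alpha_0-\varepsilon,\,\alpha_0+\varepsilon]}(\tau|x|)^{1+\alpha} \le (\tau|x|)^{1+\alpha_0-\varepsilon} + (\tau|x|)^{1+\alpha_0+\varepsilon}.
\]
Integrating each summand against $\overline{c}\,|x|^{-(1+\alpha_0)}$ over $M<|x|\le\pi/\tau$, the powers of $\tau$ recombine: the $\alpha_0-\varepsilon$ term yields $\tau^{1+\alpha_0-\varepsilon}\cdot\tau^{-(1-\varepsilon)}=\tau^{\alpha_0}$ and the $\alpha_0+\varepsilon$ term yields $\tau^{1+\alpha_0+\varepsilon}\cdot\tau^{-(1+\varepsilon)}=\tau^{\alpha_0}$, so in both cases the $\varepsilon$ cancels. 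Carrying the logarithmic factors through gives $c\tau^{\alpha_0}\log 1/\tau$ on the middle region, and adding the three regions and the first piece yields the asserted bound.
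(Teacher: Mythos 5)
Your proposal is correct and follows essentially the same route as the paper's proof: the same two-step decomposition through $S_\tau$, the same use of periodicity plus Proposition \ref{prop:upperbounds} (with the absorption of the $(\tau k)^{-1}$ term) for the first piece, and the same splitting into $\{|x|\le M\}$, $\{M<|x|\le \pi/\tau\}$, $\{|x|>\pi/\tau\}$ with the coupled ratio bound $(\tau|x|)^{1+\alpha}\log(\cdot)$ for the second. The only cosmetic difference is in the middle region: where you bound $\sup_\alpha(\tau|x|)^{1+\alpha}$ by the sum of the two endpoint exponents and integrate each, the paper normalizes to the base $|\tau x/\pi|\le 1$ on $I_\tau$ so that the supremum sits at the smallest exponent $1+\alpha_0-\varepsilon$ (its bound \eqref{bound:R}), after which the integration yields the identical $c\,\tau^{\alpha_0}\log 1/\tau$ contribution.
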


\begin{proof}
 The left hand-side is bounded by 
\begin{align*}
\sum_{i=1}^4 \E_{\theta_0} \big[\sup_{\theta \in B_\varepsilon (\theta_0)}|\wt
 S_i (X;\theta)-S_i (X;\theta)|\big]. 
\end{align*}
We evaluate each element in the last sum  
and decompose them into the following three integrals,  
\begin{align*}
&\int_{-\infty}^\infty \sup_{\theta\in B_\varepsilon (\theta_0)}|
\wt S_i(x;\theta)-S_i(x;\theta)
|f(x;\theta_0) dx \\
& \le  \int_{-\infty}^\infty  \sup_{\theta\in B_\varepsilon (\theta_0)}|
\wt S_i(x;\theta)-S_{\tau,i} (x;\theta)
|f(x;\theta_0) dx \\
& \quad + \Big( \int_{I_\tau} dx +\int_{I_\tau^c} dx \Big) \sup_{\theta\in B_\varepsilon (\theta_0)}|
S_{\tau,i} (x;\theta)-S_i (x;\theta)
|f(x;\theta_0) \\
& =: J_1+(J_2+J_3). 
\end{align*}
For 
$J_1$ we observe that 
$|\wt S_i(x;\theta)-S_{\tau,i} (x;\theta)|$ 
is periodic and has the bound in \eqref{ineq:brantlike}. 
Thus taking $\sup_{\theta\in B_\varepsilon (\theta_0)}$ in \eqref{ineq:brantlike},
 we obtain $J_1 \le c (\tau^{2+\alpha_0+\varepsilon}k)^{-1/2}$. 

Turning to $J_2$, 
in view of \eqref{eq:stau1} it suffices to 
 evaluate 
\begin{align}
\label{def:R:R'}
 |S_{\tau,i} (x;\theta)-S_i (x;\theta)| & \le \frac{\sum_{j\neq 0}
 |f_{\theta_i} (x+2\pi j/\tau;\theta)|}{f_\tau (x;\theta)} +
|S_i (x;\theta)| \cdot
 \frac{\sum_{j\neq 0} 
f(x+2\pi j/\tau ;\theta)}{f_\tau
 (x;\theta)}  \\
& =: R_i(x;\theta)+R'_i(x;\theta).\nonumber
\end{align}
Although $R_i (x;\theta)$ is treated in the proof of Theorem
 \ref{thm:consistency}, we use another bound here.   
Recall that $x^{-(1+\alpha)}\log x,\,(x>e)$ is
 decreasing. Since $\pi(2|j|-1)/\tau \le |x+2\pi j/\tau| \le
 \pi(2|j|+1)/\tau,\,j\neq 0$, the numerator has a bound 
\begin{align*}
& c \sum_{j\neq 0} |x+2\pi j/\tau |^{-(1+\alpha)} \log |x+2\pi
 j/\tau| \\
& \le c \tau^{1+\alpha} \sum_{j\neq 0} |\pi (2|j|-1)|^{-(1+\alpha)} \log |\pi
 (2|j|-1)/\tau| \\
& \le c \tau^{1+\alpha} \log 1/\tau, 
\end{align*}
for a sufficiently small $\tau>0$. 
In the meanwhile, the denominator has a uniform lower bound on $|x|\le M$, and on
 $M<|x| \le \pi/\tau$ it satisfies 
\[
 f_\tau(x;\theta) \ge f(x;\theta) \ge c |x|^{-(1+\alpha)},
\]
where the constant $M$ is the one in the proof of Theorem \ref{thm:consistency}. 
Thus, we have 
\begin{align*}
 R_i(x;\theta) \le \left \{
\begin{array}{ll}
c \tau^{1+\alpha} \log 1/\tau  & \text{on}\ |x|\le M\\
c' |\tau x/\pi|^{1+\alpha} \log 1/\tau  & \text{on}\ M<|x| \le \pi/\tau, 
\end{array}
\right.
\end{align*}
and since $|\tau x/\pi| \le 1$, we further obtain 
\begin{align}
\label{bound:R}
\sup_{\theta\in B_\varepsilon (\theta_0)} R_i(x;\theta) \le \left \{
\begin{array}{ll}
c \tau^{1+\alpha_0-\varepsilon} \log 1/\tau  & \text{on}\ |x|\le M\\
c' |\tau x/\pi|^{1+\alpha_0-\varepsilon} \log 1/\tau  & \text{on}\
 M<|x|\le \pi/\tau. 
\end{array}
\right.
\end{align}
Now, 
\begin{align*}
 \int_{I_\tau} \sup_{\theta \in B_\varepsilon (\theta_0)} R_i (x;\theta) f(x;\theta_0)
 dx &\le c \tau^{1+\alpha_0 -\varepsilon} \log 1/\tau \int_{|x|\le
 M}f(x;\theta_0) dx \\
& \quad + c' \tau^{1+\alpha_0 -\varepsilon} \log 1/\tau \int_{M<|x|\le
 \pi/\tau} |x/\pi|^{1+\alpha_0-\varepsilon} f(x;\theta_0) dx \\
& \le c \tau^{1+\alpha_0 -\varepsilon} 
\log 1/\tau\Big(1+ \int_{M<|x|\le
 \pi/\tau} |x/\pi|^{-\varepsilon} dx \Big) \\
& \le c\tau^{\alpha_0} \log 1/\tau. 
\end{align*}
Concerning $R'_i(x;\theta)$, we observe that on $I_\tau$, 
\begin{align}
\label{bound:R'}
 \sup_{\theta\in B_\varepsilon (\theta_0)} |S_i(x;\theta)|\le c \log |x|
 \le c \log 1/\tau,
\end{align}
while by similar calculations to that of $R_i$, 
\begin{align*}
 \left|
\frac{\sum_{j\neq 0} f(x+2\pi j/\tau;\theta) }{f_\tau(x;\theta)}
\right| \le \left \{
\begin{array}{ll}
c \tau^{1+\alpha}  & \text{on}\ |x|\le M\\
c' |\tau x/\pi|^{1+\alpha}   & \text{on}\
 M<|x| \le \pi/\tau. 
\end{array}
\right.
\end{align*}
Thus, except for constants, $R'_i$ has the same uniform bound as \eqref{bound:R}. 
Now collecting above results, we reach 
$$
J_2 \le c \tau^{\alpha_0} \log
 1/\tau.
$$

We again use the bound \eqref{def:R:R'} for $J_3$. 
For $R_i$ of $J_3$ we recall the argument in the proof of Theorem \ref{thm:consistency}. 
Define 
\[
 |S_{\tau,i}|(x;\theta)= \sum_{j=-\infty}^\infty \frac{|f_{\theta_i}(x+2\pi j/\tau;\theta)|}{f_\tau(x;\theta)} = 
\frac{|f_{\theta_i} (x;\theta)|}{f_\tau(x;\theta)}+ R_i(x;\theta)
\]
and then 
\[
 R_i (x;\theta) \le  |S_{\tau,i}|(x;\theta) +|S_i (x;\theta)|. 
\]
Notice that $|S_{\tau,i}|(x;\theta)$ is periodic and by exactly the same way as that for $S_{\tau,\alpha}(x;\theta)$, 
$|S_{\tau,i}|(x;\theta)$ has the bound $c \log |x|$ on $I_\tau^c$ (cf. \eqref{supstau}). Since $S_i (x;\theta) =O(\log|x|)$ for large $|x|$, 
$\sup_{\theta \in B_\vep(\theta_0)}R_i (x;\theta) \le c\log |x|$.
Due to \eqref{bound:R'}
 together with $R'_i\le |S|$, 
we further obtain $\sup_{\theta \in
 B_\varepsilon (\theta_0)} R'_i(x;\theta) \le c' \log |x|$. Thus 
\begin{align*}
 J_3 &\le \int_{I_\tau^c} 
\sup_{\theta \in B_\varepsilon (\theta_0)} \big(R_i(x;\theta)+R'_i(x;\theta)\big) f(x;\theta_0)
 dx \\ 
 & \le c \int_{I_\tau^c}  
|x|^{-(1+\alpha_0)}\log |x| dx \\
 & \le c \tau^{\alpha_0} \log 1/\tau. 
\end{align*}
Therefore, we obtain the result.
\end{proof}

Now we are ready to prove Theorem \ref{thm:asympeffi}. 
\begin{proof}[Proof of Theorem \ref{thm:asympeffi}]
We start to see $\wt S_n(\hat \theta_n)=o_p(n^{1/2})$. 
Let $\tilde \theta_n$ be the ML estimator and let $D_n:=\{
\omega: S_n(\tilde \theta_n)=0 \}$.
Then, $\P (D_n) \to 1$ as $n \to \infty$. 
By the definition of $\hat \theta_n$ and the consistency of both $\hat \theta_n$ and 
$\tilde \theta_n$, we observe that  
\begin{align*}
& \P\,\big(|n^{1/2} \wt S_n(\hat \theta_n)| > \delta \big) \le \P\big(|n^{1/2}\wt S_n(\tilde \theta_n)|>\delta \big) \\
&\le \P\,\Big (\big\{n^{1/2}|\wt S_n(\tilde \theta_n)-S_n(\tilde \theta_n)|>\delta \big\} \cap 
D_n \Big)+\P\,\big(D_n^c\big) \\
 &\le
 \P\,\Big(
\big\{n^{1/2}|\wt S_n(\tilde \theta_n)-S_n(\tilde \theta_n)|>\delta \big\} \cap 
\{\tilde \theta_n\in B_\varepsilon(\theta_0)\}
\cap D_n \Big) +\P\,\big(\tilde \theta_n \notin B_\varepsilon(\theta_0)\big)+\P\,\big(D_n^c\big) \\
 &\le \delta^{-1} \E_{\theta_0} \,\big[ 
 n^{1/2} |\wt S_n(\tilde \theta_n) -S_n(\tilde \theta_n) |{\bf 1}_{\{\tilde \theta_n \in B_\varepsilon
(\theta_0) \}}
\big]+o(1) \\
& \le   \delta^{-1} n^{1/2} \E_{\theta_0} \big[
\sup_{\theta \in B_\vep (\theta_0)} |\wt S(X;\theta)-S(X;\theta)|
\big]+o(1).
\end{align*}
Due to Lemma \ref{lemma:asymptoticnormal} and the condition on $\tau,\,k$, the right-hand side converges to $0$ as 
$n\to \infty$.

Next we prove 
\begin{align}
\label{conv:donsker}
 n^{1/2}\big(
\wt S_n(\hat \theta_n)- \wt S(\hat \theta_n)\big) -n^{1/2}\big(
\wt S_n(\theta_0)- \wt S(\theta_0)
\big)\stackrel{p}{\to} 0, 
\end{align}
where $\wt S(\theta)$ is the function $\theta \mapsto \E_{\theta_0}[\wt S(X;\theta)]$ (This corresponds to \cite[(5.22)]{vandervaat:2000}). 
Write the left-hand side as 
\begin{align*}
&  \underbrace{n^{1/2}\big\{
\wt S_n(\hat \theta_n)- S_n(\hat \theta_n) -\big(
\wt S(\hat \theta_n)- S(\hat \theta_n) \big) \big\}}_{I_1} + \underbrace{n^{1/2}\big\{S_n(\hat \theta_n)- S(\hat \theta_n) -\big(
S_n(\theta_0)- S(\theta_0) \big) \big\}}_{I_2} 
 \\
& - \underbrace{n^{1/2}\big\{\wt S_n(\theta_0)- S_n(\theta_0) -\big(
\wt S(\theta_0)- S(\theta_0) \big) \big\}}_{I_3} =: I_1+I_2-I_3, 
\end{align*}
where $S(\theta)$ is a function $\theta \mapsto \E_{\theta_0}[S(X;\theta)]$.
We study $n^{1/2}\big(
\wt S_n(\hat \theta_n)- S_n(\hat \theta_n)\big)$ in $I_1$ first. Observe that for any $\delta>0$,
\begin{align*}
 \P\,\big(n^{1/2}|\wt S_n(\hat \theta_n)- S_n(\hat \theta_n) |>\delta\big) 
&= \P\, \big(\{n^{1/2}|\wt S_n(\hat \theta_n)- S_n(\hat \theta_n) |>\delta\} \cap \{\hat \theta_n \in B_\varepsilon (\theta_0)\}\big)+ 
\P\,\big(\hat \theta_n \notin B_\varepsilon (\theta_0)\big) \\
&\le  
\delta^{-1} \E_{\theta_0} \big[
n^{1/2}|
\wt S_n(\hat \theta_n) -S_n(\hat \theta_n)
|{\bf 1}_{\{\hat \theta_n \in B_\varepsilon
(\theta_0) \}}
\big]
+o(1) \to 0 
\end{align*}
as $n\to \infty$, where we borrow the logic of the previous proof in the last step. 
In a similar manner, other quantities in $I_1$ and $I_3$ converge to $0$ in probability. 
For the quantity $I_2$, we use the continuous differentiability of the score of MLE: 
$\theta \mapsto S(x;\theta)$ for every $x$. In view of Lemma 2.1, in \cite{matsui:2019} each 
element of the matrix $S_\theta(x;\theta)=\partial S(x;\theta)/\partial \theta$ is continuous in $x$ and 
$\| S_\theta (x;\theta) \|=O((\log |x|)^2)$ in the tail for all $\theta\in C$, where $\|\cdot\|$ a matrix norm. 
Hence $\E\big[\sup_{\theta\in C}\|S_\theta(X;\theta)\|^2\big]<\infty$, and 
the Lipschitz condition for $S(x;\theta)$ given in \cite[Theorem 5.21]{vandervaat:2000} is satisfied, where $\dot \psi(x)$ there is 
replaced with $\sup_{\theta\in C}\|S_\theta (x;\theta)\|$ here. 
Then due to \cite[Example 19.7]{vandervaat:2000}, the functions $S(x;\theta)$ form a Donsker class 
(see also the proof of \cite[Theorem 5.21]{vandervaat:2000}) and by \cite[Lemma 19.24]{vandervaat:2000}, $I_2 \stackrel{p}{\to}0$.

Recall that $\wt S_n(\hat \theta_n)=o_p(n^{1/2})$ and $\wt S(\theta_0)=0$, and we may write 
\begin{align*}
 n^{1/2}\big(
\wt S_n(\hat \theta_n)- \wt S(\hat \theta_n)\big) &= n^{1/2}\big(
\wt S(\theta_0)- \wt S(\hat \theta_n)
\big) +o_p(1) \\
&= n^{1/2}\big(
\wt S(\theta_0)- S(\theta_0)
\big) + n^{1/2}\big(
S(\theta_0)- S(\hat \theta_n)
\big) - n^{1/2}\big(
\wt S(\hat \theta_n)- S(\hat \theta_n)
\big) +o_p(1) \\
&= n^{1/2}\big(S(\theta_0)- S(\hat \theta_n)\big) +o_p(1),
\end{align*}
where we again use Lemma \ref{lemma:asymptoticnormal} in the last step.

Now applying the delta method to the last quantity, we find from \eqref{conv:donsker} that 
\[
 n^{1/2} I(\theta_0)(\hat \theta_n- \theta_0) +n^{1/2}o_p(|\hat \theta_n-\theta_0|) = n^{1/2}\big(
\wt S_n(\theta_0) -\wt S(\theta_0)
\big)+o_p(1),
\] 
where $I(\theta_0)$ is the Fisher information, that is, the derivative of $\theta \mapsto -\E_{\theta_0}[S(X;\theta)]$ at $\theta_0$ 
(Here $I(\theta_0)$ corresponds to $-V_{\theta_0}$ in \cite[Theorem 5.21]{vandervaat:2000}). 
Since $I(\theta_0)$ is invertible (see the beginning of the proof for Theorem \ref{thm:consistency}), we have 
\[
 n^{1/2}|\hat \theta_n-\theta_0| \le \|I(\theta_0)^{-1}\| n^{1/2}| I(\theta_0)(\hat \theta_n-\theta_0) | 
=O_p(1)+ o_p(n^{1/2}|\hat \theta_n-\theta_0|).
\]
This implies $\hat \theta_n$ is $n^{1/2}$-consistent, so that 
$n^{1/2}o_p(|\hat \theta_n-\theta_0|)=o_p(1)$. Thus we obtain 
\[
 n^{1/2}I(\theta_0) (\hat \theta_n-\theta_0) = n^{1/2} \wt S_n(\theta_0)+o_p(1). 
\]
Finally, by the asymptotic linear representation of the ML estimator \cite[Theorem 3.1]{matsui:2019}, we have
\begin{align*}
 n^{1/2}(\hat \theta_n-\tilde \theta_n) &=n^{1/2} (\hat \theta_n-\theta_0) -n^{1/2} (\tilde \theta_n-\theta_0)+o_p(1) \\   
& =I(\theta_0)^{-1} n^{1/2}\big(\wt S_n(\theta_0)-S_n(\theta_0)\big)+o_p(1). 
\end{align*}
Again by Lemma \ref{lemma:asymptoticnormal} the left is $o_p(1)$. 
\end{proof}

\section{Supplement for Section \ref{application:ou}}
\label{derivatives:oustable}
Exact forms of derivative vectors of the conditional ch.f. 
$\varphi_{t\mid s,\theta}(u;\theta)$ with
$\theta=(\lambda,\alpha,\sigma)$ are given. For convenience we  
define $\psi_{t\mid s}(u;\theta)=\log
\varphi_{t\mid s}(u;\theta)$ and present expressions for $\psi_{t \mid s,\theta_i}(u;\theta)=\partial
\psi_{t\mid s}(u;\theta)/\partial \theta_i, i=1,2,3$, which are  
\begin{align*}
\psi_{t\mid s,\lambda}(u;\theta) &= -iu (t-s)X_s e^{-\lambda(t-s)}
 +\frac{|\sigma u|^\alpha}{\lambda \alpha }\big(
\lambda^{-1}(1-e^{-\alpha \lambda(t-s)})-\alpha(t-s) e^{-\alpha \lambda (t-s)}
\big), \\
\psi_{t \mid s,\sigma}(u;\theta) &= -
 \sigma^{\alpha-1}|u|^\alpha \lambda^{-1} (1-e^{-\alpha \lambda (t-s)}),\\
\psi_{t \mid s,\alpha}(u;\theta) &= 
\frac{|\sigma u|^\alpha}{\lambda \alpha}
\big\{
 (-\log |\sigma u|+\alpha^{-1})(1-e^{-\lambda \alpha
 (t-s)})-\lambda(t-s) e^{-\lambda \alpha (t-s)}
\big\}.
\end{align*}

{\small
}

\end{document}